\documentclass[12pt]{amsart}
\usepackage{amssymb,graphicx}

\newcommand{\LL}{\mathcal L}

\newcommand{\Z}{\mathbb Z}
\newcommand{\N}{\mathbb N}

\DeclareMathOperator{\diam}{diam}

\newtheorem{thm}{Theorem}
\newtheorem{lem}[thm]{Lemma}
\newtheorem{cor}[thm]{Corollary}

\begin{document}
\title[MET and relative equilibrium states]{A multiplicative ergodic theoretic
characterization of relative equilibrium states}
\author{John Antonioli, Soonjo Hong and Anthony Quas}
\thanks{For SH, funding was provided by Hongik University, Korea}
\thanks{AQ's research was partially supported by NSERC}
\maketitle
\begin{abstract}
In this article, we continue the structural study of factor maps between symbolic dynamical 
systems and the relative thermodynamic formalism. Here, one is studying a factor map
from a shift of finite type $X$ (equipped with a potential function) to a sofic shift $Z$, 
equipped with a shift-invariant measure $\nu$. We study relative equilibrium states, that is
shift-invariant measures on $X$ that push forward under the factor map to $\nu$ which 
maximize the relative pressure: the relative entropy plus the integral of $\phi$. 


In this paper, we establish a new connection to multiplicative ergodic theory by
relating these factor triples to a cocycle of Ruelle Perron-Frobenius operators,
and showing that the principal Lyapunov exponent of this cocycle is the relative pressure; and the 
dimension of the leading Oseledets space is equal to the number of measures of relative
maximal entropy, counted with a previously-identified concept of multiplicity. 
\end{abstract}

\section{Introduction}

Let $A$ and $B$ be finite non-empty sets, let $X\subset A^\Z$ be an irreducible shift of finite type, 
and let $\pi:X\to B^\Z$ be a shift-commuting
map so that $Z=\pi(X)$ is a sofic factor of $X$. 
Given a shift-invariant
measure $\nu$ on $Z$, we are interested in $\pi_*^{-1}\{\nu\}$, the set of shift-invariant measures on $X$
whose push-forward is $\nu$. \emph{Relative thermodynamic formalism} gives a means to identify distinguished 
elements of $\pi_*^{-1}\{\nu\}$ similar to standard thermodynamic formalism. In fact, standard thermodynamic
formalism is the special case of relative thermodynamic formalism where $Z$ is the one-point space. 

We make a standing assumption that the 
factor map has the property
that $\pi(x)_0$ only depends on $x_0,x_1,\ldots$ (and in fact in this case, 
by the Curtis-Hedlund-Lyndon theorem, 
$\pi(x)_0$ only depends on $x_0,\ldots,x_{k-1}$ for some $k\in\N_0$).
We call this a \emph{forward-looking factor map}.
The simplest case of such factor maps is the case where $\pi(x)_0$ depends only on 
$x_0$, that is $\pi$ is a one-block factor map. It is well-known (\cite[Proposition 1.5.12]{LindMarcus}) that up to 
conjugacy, this is the general case. 
Let $X^+\subset A^{\N_0}$ and $Z^+\subset B^{\N_0}$ denote the one-sided versions of $X$ and $Z$.
By the assumption on the factor map, $\pi$ induces a map from $X^+$ to $Z^+$, which we also call $\pi$. 
 
Given an invariant measure $\nu$ on $Z$,
and a H\"older continuous function $\phi$ on $X^+$ (which we call a \emph{potential}
and which we also view as a function on $X$),
recall that a \emph{relative
equilibrium state} of $\phi$ over $\nu$ is an invariant measure $\mu$ on $X$ such that 
$\pi_*(\mu)=\nu$ and 
$h_\mu+\int \phi\,d\mu=r_\pi(\nu)$, where
$r_\pi(\nu):=\max_{\{\lambda\colon \pi_*(\lambda)=\nu\}}\big(h_\lambda+\int
\phi\,d\lambda\big)$. Since $\phi$ is continuous, existence of a relative equilibrium state
follows from compactness of the space of measures and upper semi-continuity of entropy. 
A number of papers \cite{PQS, MahsaAQ, AAY} have given bounds on the 
number of ergodic relative equilibrium states and measures of relative maximal entropy (relative equilibrium
states in the case where $\phi=0$)
in this setting.

We are seeking
to relate the number of relative equilibrium states of $\phi$ over $\nu$
to terms of the Lyapunov exponents and Oseledets spaces of a 
cocycle of Perron-Frobenius operators that we describe below. 
For $0<\beta<1$, we
introduce a metric $d_\beta$ on $X^+$ given by 
$d_\beta(x,x')=\beta^{\min\{n\colon x_n\ne x'_n\}}$ and write $C^\beta(X^+)$ for the
Lipschitz functions with respect to this metric. For $\beta\in [\frac 12,1)$, $C^\beta(X^+)$
is precisely the collection of $(-\log_2\beta)$-H\"older continuous functions with respect to
the standard metric $d_{1/2}$. More generally, as is common in symbolic dynamics,
we refer
to the spaces $C^\beta(X^+)$ as the H\"older continuous functions on $X^+$ (even when $0<\beta<\frac 12$).
Once the potential $\phi$ is fixed, we
choose $\beta$ so that $\phi\in C^\beta(X^+)$.

For $j\in B$, define an operator $\LL_j$ on $C^\beta(X^+)$ by
$$
\LL_jf(x)=\sum_{\{i\colon ix\in X^+;(\pi(ix))_0=j\}}
e^{\phi(ix)}f(ix),
$$
where $ix$ denotes the point in $A^{\N_0}$ defined by $(ix)_0=i$, 
$(ix)_n=x_{n-1}$ for $n\ge 1$. Similarly for a word $w=w_{0}\ldots w_{k-1}$, 
$(wx)_n=w_{n}$ if $n<k$ and $(wx)_n=x_{n-k}$ if $n\ge k$.
If $x$ is an element of $X$ or $X^+$, we use the notation $x_0^{n-1}$ to denote the word $x_0\ldots x_{n-1}$. 
We shall study the cocycle over the dynamical system $\sigma\colon Z\to Z$
where the map corresponding to $z$ is $\LL_z\colon=\LL_{z_0}$. As usual, we define
$\LL_z^{(n)}=\LL_{\sigma^{n-1}z}\circ\ldots\circ \LL_z$.

An inductive calculation shows that
$$
\LL_z^{(n)}f(x)=\sum_{\{w\colon (\pi(wx))_0^{n-1}=z_0^{n-1}, w_{0}\ldots w_{k-1}x\in X^+\}}
e^{S_n\phi(wx)}f(wx),
$$
where as usual, $S_n\phi(wx)$ denotes the sum $\phi(wx)+\ldots+\phi(T^{n-1}(wx))$. 

Our main theorem states that for an ergodic invariant measure $\nu$ on $Z$ and a H\"older continuous
potential, the number of ergodic relative equilibrium states on $X$ 
is the multiplicity of the top Lyapunov exponent
of the above cocycle. While we defer detailed definitions and statements which the theorem relies on,
we mention Theorems \ref{thm:Yoo-factor} and \ref{thm:Yoo-mult} due to Jisang Yoo which establish that a 
factor map $\pi:X\to Z$ of the type that we consider 
may be expressed as a composition of factor maps $\pi_1\colon X\to Y$ and
$\pi_2\colon Y\to Z$ with nice properties defined in detail below: $\pi_1$ is of class degree 1,
and $\pi_2$ is finite-to-one of degree $c_\pi$ which is the class degree of the original map $\pi$. 
Recall that for a finite-to-one factor map $\pi_2\colon Y\to Z$ 
from one irreducible two-sided sofic system to another, the \emph{degree} of $\pi_2$
is the minimal cardinality of $\pi_2^{-1}(z)$ as $z$ runs over $Z$. The minimum is attained for
all doubly transitive (i.e. right and left transitive) points (see \cite[Theorem 9.1.11]{LindMarcus}). 

If $\nu$ is a fully-supported ergodic shift-invariant measure on $Z$, then since $\pi_2\colon Y\to Z$
is of degree $c_\pi$, $\nu$-almost every $\omega\in Z$ has $c_\pi$ pre-images. It may be shown that
there are only finitely many ergodic invariant measures on $Y$ that factor onto $\nu$,
$\nu_1,\ldots,\nu_k$ say.
Yoo defines \emph{multiplicities} $m_1,\ldots,m_k$ of these measures with $m_1+\ldots+m_k=c_\pi$
and shows that for $\nu$-almost every $z\in Z$, of the $c_\pi$ elements of $\pi_2^{-1}(z)$,
$m_i$ are generic for $\nu_i$ for each $i$.

Since the multiplicative ergodic theory of infinite-dimensional operators is less well-known than in the finite-dimensional 
case, we include a quick summary. While there are multiple versions of operator-valued multiplicative ergodic theorems,
we focus on the context in this article. Assume that there is a `base' dynamical system $\sigma\colon Z\to Z$
which is a continuous homeomorphism from a compact metric space to itself. The space $Z$
is assumed to be equipped with a $\sigma$-invariant ergodic Borel probability measure $\nu$.  
A function on $Z$ is said to be \emph{$\nu$-continuous} if for any $\epsilon>0$, there exists a subset
$Z'\subset Z$ of measure at least $1-\epsilon$ on which the restriction of the function is continuous. 
Recall that by Lusin's theorem, any Borel-measurable real-valued function on $Z$ is $\nu$-continuous, 
but this is not necessarily true for functions with non-separable ranges. 
There is also a Banach space $B$ and a collection
$\{\mathcal L_z\colon z\in Z\}$ of linear maps from $B$ to itself. One then studies the operators $\mathcal L_z^{(n)}$, 
defined by $\LL_z^{(n)}=\LL_{\sigma^{n-1}z}\circ\ldots\circ\LL_z$. Under conditions of quasi-compactness (which are satisfied 
in our context), there exists a leading Lyapunov exponent $\lambda_1$, an exponent $\lambda_2<\lambda_1$,
a \emph{multiplicity} $M$, a $\nu$-continuous map $E$
from $Z$ into $\mathcal G_{M}(B)$, the collection of $M$-dimensional subspaces of $B$, and $\nu$-continuous maps $\eta_1,\ldots,\eta_{M}$ from $Z$ into $B^*$
satisfying
\begin{itemize}
\item(equivariance) $\LL_z(E(z))=E(\sigma(z))$, $\nu$-a.e.; and $\LL_z(F(z))\subset F(\sigma(z))$, where $F(z)=\bigcap_{i=1}^M\ker\eta_i(z)$, 
$\nu$-a.e.;
\item(growth) $\lim_{n\to\infty}\frac 1n\log\|\LL_z^{(n)}f\|=\lambda_1$ for all $f\in E(z)\setminus\{0\}$; and
$\lim_{n\to\infty}\frac 1n\log\|\LL_z^{(n)}|_{F(z)}\|=\lambda_2$.
\end{itemize}

A one sentence summary of this is that over a.e.\ $z$, $B$ decomposes into two equivariant spaces of dimension $M$ and co-dimension $M$ 
respectively on which the growth rates of the operator cocycle are $\lambda_1$ and $\lambda_2$ respectively. 

Some of the terms appearing in the statement below of the main theorem will be defined in Section \ref{sec:background}.
\begin{thm}\label{thm:main}
Let $\pi$ be a forward-looking factor map from an irreducible (two-sided) shift of finite type $X$ to a 
sofic shift $Z$ and let $\phi\in C^\beta(X^+)$. 
Let $\pi=\pi_2\circ\pi_1$ be the factorization described above where 
$\pi_1\colon X\to Y$ is of class degree one and
map $\pi_2:Y\to Z$ has degree equal to the class degree of $\pi$.

Let $\nu$ be a fully supported ergodic invariant measure on $Z$ and let $\nu_1$,\ldots,$\nu_k$ be 
the lifts of $\nu$ under $\pi_2$ with multiplicities $m_1,\ldots,m_k$ respectively.
Let $r_{\pi_1}(\nu_i)$ 
be the relative pressure of $\phi$ over $\nu_i$ and $r_\pi(\nu)$ be the relative pressure of $\phi$ over $\nu$.

Let $(\LL_z^{(n)})$ be the cocycle of linear operators over $Z$ acting on $C^\beta(X^+)$ described above. 
Then the largest Lyapunov exponent of the cocycle is $r_\pi(\nu)$ and the multiplicity of this
exponent is
$$
\sum_{r_{\pi_1}(\nu_i)=r_\pi(\nu)}m_i.
$$
\end{thm}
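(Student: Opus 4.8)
The plan is to combine a Banach-space multiplicative ergodic theorem for the cocycle $(\LL_z^{(n)})$ on $C^\beta(X^+)$ with the canonical factorization $\pi=\pi_2\circ\pi_1$ and the relative Ruelle--Perron--Frobenius theory for maps of class degree one. First, the operators $\LL_j$ are bounded and positive on $C^\beta(X^+)$, and the usual Lasota--Yorke estimate for H\"older transfer operators, in its multiplicative form $\|\LL_z^{(n)}f\|_\beta\le Z_n(z)(A\beta^n\|f\|_\beta+B\|f\|_\infty)$ with $Z_n(z):=\|\LL_z^{(n)}1\|_\infty$ and $A,B$ depending only on $\phi$, together with the compactness of the embedding $C^\beta(X^+)\hookrightarrow C^0(X^+)$, shows that the cocycle is quasi-compact, its index of compactness being at most $\log\beta$ plus the top Lyapunov exponent and hence (as $\beta<1$) strictly below it. Since $\sigma\colon Z\to Z$ is a homeomorphism and $z\mapsto\LL_z$ is measurable, the semi-invertible operator Oseledets theorem applies and yields a measurable splitting $C^\beta(X^+)=V_z\oplus H_z$ with $V_z$ finite-dimensional, every nonzero element of $V_z$ growing at the top rate $\lambda_1$, and $H_z$ growing at rate at most $\lambda_2<\lambda_1$; ergodicity of $\nu$ makes $\dim V_z=d$ constant a.e. To identify $\lambda_1$, positivity gives $\|\LL_z^{(n)}\|_{C^0\to C^0}=Z_n(z)$ and bounded distortion (from $\phi$ being H\"older) gives $\|\LL_z^{(n)}\|_\beta\asymp Z_n(z)$, so $\lambda_1=\lim_n\frac1n\log Z_n(z)$; this limit exists and is a.e.\ constant (subadditivity of $\log Z_n$ along the cocycle, Kingman, ergodicity) and equals the relative pressure $r_\pi(\nu)$ by the Ledrappier--Walters relative variational principle, irreducibility of $X$ being used to see that the $x$-dependence hidden in $Z_n(z)$ is irrelevant to the exponential rate. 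It remains to compute $d$.

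\textbf{Decomposition along the factorization.} Since $\pi_2$ is finite-to-one it preserves fibre entropy, so the chain rule for fibre entropy, the entropy-preservation of finite-to-one codes, and the uniqueness of the relative equilibrium state over each $\nu_i$ (class degree one of $\pi_1$) give $r_\pi(\nu)=\max_i r_{\pi_1}(\nu_i)$, and show that the ergodic relative equilibrium states of $\phi$ over $\nu$ are exactly the measures $\mu_i$ (the relative equilibrium state of $\phi$ over $\nu_i$) with $r_{\pi_1}(\nu_i)=r_\pi(\nu)$. For each $i$, class degree one of $\pi_1$ also yields a relative Ruelle--Perron--Frobenius theorem over $(Y,\nu_i)$: a random cone argument --- the $\pi_1$-cocycle eventually maps a cone of positive H\"older functions into a subcone of uniformly bounded Hilbert diameter --- shows that the corresponding cocycle of sub-operators has a simple top Lyapunov exponent $r_{\pi_1}(\nu_i)$ with a spectral gap. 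Now decompose
$$
\LL_z^{(n)}f=\sum_{u\in\pi_2^{-1}(z_0^{n-1})}\mathcal{N}_u^{(n)}f,\qquad
\mathcal{N}_u^{(n)}f(x)=\sum_{\{w\in\pi_1^{-1}(u)\colon wx\in X^+\}}e^{S_n\phi(wx)}f(wx),
$$
the outer sum running over admissible $Y$-words $u$ of length $n$ with $\pi_2(u)=z_0^{n-1}$. Finite-to-oneness of $\pi_2$ makes the number of such $u$ subexponential in $n$, and the transition-class machinery for $\pi_2$ partitions them, for $\nu$-a.e.\ $z$ and $n$ large, into ``threads'' that stabilise and track the lifts $\nu_1,\dots,\nu_k$, with asymptotically $m_i$ threads tracking $\nu_i$; along a thread tracking $\nu_i$, the operator $\mathcal{N}_u^{(n)}$ equals $e^{n r_{\pi_1}(\nu_i)}$ times a rank-one operator, up to an exponentially subordinate correction, by the gap of the previous sentence.

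\textbf{Counting the leading directions.} For the upper bound $d\le\sum_{r_{\pi_1}(\nu_i)=r_\pi(\nu)}m_i$, collect the leading rank-one pieces coming from threads tracking a maximal lift into an operator $R_n$ with $\operatorname{rank}R_n\le\sum_{r_{\pi_1}(\nu_i)=r_\pi(\nu)}m_i$, and absorb everything else --- the (subexponentially many) exponentially subordinate corrections, and the threads tracking non-maximal lifts, which grow at rate $r_{\pi_1}(\nu_j)<\lambda_1$ --- into $\tilde E_n:=\LL_z^{(n)}-R_n$, so that $\limsup_n\frac1n\log\|\tilde E_n\|<\lambda_1$. Since every unit vector of $V_z$ is expanded by $\LL_z^{(n)}$ at rate $\lambda_1$, uniformly on the compact projective space of the finite-dimensional $V_z$, while $\tilde E_n|_{V_z}$ is exponentially smaller, $R_n|_{V_z}$ is injective for all large $n$, hence $d\le\operatorname{rank}R_n$. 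For the lower bound, normalise the $\sum m_i$ leading rank-one pieces, over all threads of all maximal lifts, by $e^{-n\lambda_1}$; a subsequence converges to $\sum_{r_{\pi_1}(\nu_i)=r_\pi(\nu)}m_i$ rank-one operators with linearly independent ranges --- threads tracking distinct transition classes (or distinct maximal lifts) produce asymptotically distinct rank-one limits, whose ranges are the fibrewise densities of the corresponding with-multiplicity ergodic relative equilibrium states, while threads tracking non-maximal lifts contribute $0$ after this normalisation. These ranges lie in $V_z$ because they realise the rate $\lambda_1$, so $d\ge\sum_{r_{\pi_1}(\nu_i)=r_\pi(\nu)}m_i$. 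Combining the two bounds gives the claimed equality, and simultaneously a bijection between a basis of $V_z$ and the with-multiplicity list of ergodic relative equilibrium states of $\phi$ over $\nu$.

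\textbf{Main obstacle.} The delicate part is the decomposition of the last two paragraphs: one must show that the non-leading parts along each thread are \emph{uniformly} exponentially subordinate --- i.e.\ that the random Ruelle gap coming from class degree one is uniform in a suitable sense, not merely a.e.\ positive --- and one must control the threads before they stabilise (threads that merge or split at finite times) and prove the asymptotic linear independence of the leading rank-one limits. It is this last point that genuinely ties the Oseledets dimension to the previously-defined multiplicities $m_i$, and it is where the transition-class structure of $\pi_2$ must be used most carefully.
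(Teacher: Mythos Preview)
Your high-level strategy matches the paper's: factor $\pi=\pi_2\circ\pi_1$, establish simplicity of the top exponent for the class-degree-one piece via cone contraction, and decompose $\LL_z^{(n)}$ along the fibres of $\pi_2$ to count dimensions. The substantive divergence is in how that decomposition is set up, and the paper's choice dissolves exactly the obstacle you flag at the end.

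You decompose $\LL_z^{(n)}$ as a sum over the \emph{words} $u\in\pi_2^{-1}(z_0^{n-1})$, of which there are subexponentially many, and must then argue that these organise into $c_\pi$ ``threads'' that eventually stabilise and whose leading rank-one pieces become linearly independent in the limit. The paper instead defines a modified cocycle $\bar\LL$ over $Y$: whenever $y$ passes through a minimal transition block, $\bar\LL_y$ first projects onto the part of $X^+$ routable through the corresponding representative, then applies $\LL$. Using Yoo's ergodic \emph{degree joining} $\bar\mu$ on $Y^{c_\pi}$ --- supported on $c_\pi$-tuples $(y^1,\ldots,y^{c_\pi})$ of mutually separated $\pi_2$-preimages of a common $z$, with exactly $m_i$ coordinates generic for $\nu_i$ --- one gets the \emph{exact} identity
\[
\LL_z^{(n)}=\sum_{j=1}^{c_\pi}\bar\LL_{y^j}^{(n)}
\]
for every $n$ with $z_0^{n-1}$ containing the transition block; the cross-terms vanish because, by the Allahbakhshi--Hong--Jung theorem, the transition classes over a right-transitive $z$ are \emph{mutually separated}. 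This is the point you are missing: mutual separation means the $c_\pi$ summands produce functions with \emph{disjoint supports} in $X^+$, so linear independence of their top Oseledets directions is immediate --- no asymptotic or uniformity argument is needed. Simplicity for each $(\bar\LL_{y^j}^{(n)})$ over $(Y,\nu_i)$ then comes from Horan's cone-contraction criterion, and since $m_i$ of the $y^j$ are $\nu_i$-generic, exactly $\sum_{r_{\pi_1}(\nu_i)=r_\pi(\nu)}m_i$ of the $c_\pi$ one-dimensional spaces grow at the maximal rate.

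So the ``main obstacle'' you identify --- uniform subordination of remainders, control of threads before stabilisation, and asymptotic linear independence of the rank-one limits --- is not a detail to be filled in later but the heart of the proof, and the word-indexed decomposition does not supply the structure to resolve it. The remedy is to index the decomposition by the $c_\pi$ transition classes (equivalently, the $\pi_2$-preimages of the \emph{point} $z$) rather than by word preimages of $z_0^{n-1}$, and to invoke mutual separation to get disjoint supports.
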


In the case where $\phi$ is locally constant, we can give a more tangible description of this multiplicity
as the multiplicity of the leading exponent of an explicit matrix cocycle.

\begin{cor}\label{cor:locconst}
Let $\pi$, $X$, $Z$ and $\nu$ be as in the statement of Theorem \ref{thm:main} and suppose
additionally that $\phi$ is locally constant. Then the action of $\LL_z^{(n)}$ restricts
to the space of locally constant functions (constant on cylinders of the same length).
The multiplicity of the leading 
exponent of this cocycle is the same as that of the cocycle in Theorem \ref{thm:main}.
\end{cor}

In the proof of this corollary, we assume (without loss of generality) that $X$ is a 1-step shift
of finite type and $\phi(x)$ depends only on 
$x_0$. In this case, the locally constant functions in the proof also depend only on the 0th coordinate. 
It is then straightforward to write down the matrix cocycle representing the action of
$\LL_z^{(n)}$ on these functions.

\section{An Example}
In this section, we give an example illustrating the objects appearing in the theorem
and the corollary. The example is mostly self-contained, but we rely on one fact from the proof of the main
theorem while presenting the example.
In fact, where the corollary would give a cocycle of $3\times 3$ matrices,
we are able to exploit some symmetries to build instead a cocycle of $2\times 2$ matrices.
Let $X=\{0,1,2\}^\Z$ and $Z=\{F,N\}^\Z$. The factor map is defined by 
$\pi(x)_j=F$ (for flip) if $x_j$ and $x_{j+1}$ have opposite parities and
$\pi(x)_j=N$ if $x_j$ and $x_{j+1}$ have the same parity (modulo 2).
We define the potential $\phi$ by $\phi(x)=x_0$. 
For any $z\in Z$, the pre-image set $\pi^{-1}\{z\}$ consists of two classes, one consisting of points where
$x_0$ is 0 or 2 and the other consisting of points with $x_0=1$. These two classes are \emph{mutually separated}:
at each $j\in\Z$, points in one class have even symbols while points in the other class have odd symbols, 
so that $\pi$ has class degree 2. A suitable factorization of $\pi$ into the composition of a map of class degree
1 and a map of degree 2 is given by $\pi=\pi_2\circ\pi_1$ where $Y=\{0,1\}^\Z$,
$\pi_1(x)_i=x_i\mod 2$ and $\pi_2$ is $\pi|_Y$. To see that 
$\pi_1$ is of class degree 1, notice that if $x,x'\in \pi_1^{-1}(y)$, then the hybrid point $\bar x$ agreeing 
with $x$ on symbols up to the $(k-1)$st and agreeing with $x'$ thereafter belongs to $X$ and is a 
pre-image of $y$, so that $x$ transitions to $x'$ for any two elements of $\pi_1^{-1}(y)$ (as defined in Section
\ref{sec:background}). To see that
$\pi_2$ has degree 2, notice that each $z\in Z$ has exactly two pre-images, one the image of the
other under the map $r\colon Y\to Y$ given by $r(y)_j=1-y_j$. 

If $\mu_p$ is the Bernoulli measure on $Y$ with 0's with probability $p$ and 1's with 
probability $1-p$, then $\mu_p\circ\pi_2^{-1}=\mu_{1-p}\circ\pi_2^{-1}$ (this follows from the 
facts that $\pi_2\circ r=\pi_2$ and
$\mu_p\circ r^{-1}=\mu_{1-p}$). We let $\nu_p=\mu_p\circ \pi_2^{-1}$. 
It is not hard to see that if $p=\frac 12$, then $\nu_p$ is the uniformly distributed Bernoulli
measure on $Z$. On the other hand, for $p\ne \frac 12$, the measure $\nu_p$ is 
a Hidden Markov Measure where there is long range dependence between the 
entries (see for example \cite{Blackwell}). 

We then look at the equilibrium states on $X$ for $\phi$ relative to the factor $\nu_p$ on $Z$. 
We find these by first understanding the lifts of $\nu_p$ to $Y$ under $\pi_2$.
The ergodic lifts of $\nu_p$ to $Y$ are $\mu_p$ and $\mu_{1-p}$, each with multiplicity 1
in the case $p\ne\frac 12$;
and $\mu_{\frac 12}$ with multiplicity 2 in the case $p=\frac 12$. To understand this, notice that a typical 
point of $\nu_{\frac 12}$ has two preimages, both generic for the Bernoulli measure $\mu_{\frac 12}$ on $Y$
with the image of the other under $r$.

To find the relative equilibrium states for $\phi$ over $(Z,\nu_p)$ with the factor map $\pi$,
we then look for the relative equilibrium states of $\phi$ over $(Y,\mu_p)$ and
$(Y,\mu_{1-p})$ with the factor map $\pi_1$. By \cite{AAY}, since $\pi_1$ has class degree 1,
there is a unique relative equilibrium state of $\phi$ over $(Y,\mu_p)$ for each $p$. 
The relative pressures with the factor map $\pi_1$ over $(Y,\mu_p)$ and $(Y,\mu_{1-p})$
are $p\log(1+e^2)+(1-p)$ and $(1-p)\log(1+e^2)+p$. To see this, notice
that to lift $(Y,\mu_p)$, the symbol $0$ in $Y$ with probability $p$ is to be split into two states $0$ and $2$. 
Choosing $0$ with probability $p/(1+e^{2})$ and $2$ with probability $pe^{2}/(1+e^{2})$ maximizes the
relative pressure. For $p=\frac 12$, the relative equilibrium state is a lift of $\mu_{\frac12}$ to $X$
under $\pi_1$. The relative equilibrium state is the Bernoulli measure with 0, 1 and 2 having weights
$\frac 12/(1+e^2)$, $\frac 12$ and $\frac12 e^2/(1+e^2)$ respectively. The relative pressure
is $\frac 12(1+\log(1+e^2))$.

Since it is not easy to directly compute exponents of Perron-Frobenius operator cocycles, we identify a
finite-dimensional space $V$ of piecewise constant functions, invariant under the cocycle,
and do computations there. 
That this is possible is because the function $\phi$ is piecewise constant. 

We also need to see why the growth rates appearing in the subspace $V$ are the maximal growth
rates in the full Banach space. This follows since $V$ intersects each of the cones $\mathcal C_a\cap S_P$
appearing in Lemma \ref{lem:finitediam} for $P$ satisfying the conditions appearing in Lemma
\ref{lem:positiveset}. 

Let the two-dimensional space $V$ be the collection of
functions on $X$, constant on cylinders of length 1, with the property that the value on the [0] and
[2] cylinders are equal. We
claim that both $\mathcal L_N$ and $\mathcal L_F$ map $V$ into itself. We represent an element
$f$ of $V$ by a vector consisting of its values on the cylinder sets $[0]\cup [2]$ and $[1]$ respectively.
We then compute the action of $\mathcal L_N$ and $\mathcal L_F$ on $V$ as follows: 

We have 
\begin{align*}
\LL_N f(x)&=\begin{cases}
e^{\phi(1x)}f(1x)&\text{if $x_0=1$;}\\
e^{\phi(0x)}f(0x)+e^{\phi(2x)}f(2x)&\text{if $x_0$ is 0 or 2.}
\end{cases}\\
\LL_F f(x)&=\begin{cases}
e^{\phi(0x)}f(0x)+e^{\phi(2x)}f(2x)&\text{if $x_0=1$;}\\
e^{\phi(1x)}f(1x)&\text{if $x_0$ is 0 or 2.}
\end{cases}
\end{align*}

Recalling that $\phi(x)=x_0$ and representing both $f$ and its image by vectors in the order
described above, we have $\LL_N$ and $\LL_F$ are represented on $V$ by the matrices
$$
A_N=\begin{pmatrix}1+e^2&0\\0&e\end{pmatrix}
\quad\text{and}\quad
A_F=\begin{pmatrix}0&e\\1+e^2&0\end{pmatrix}.
$$
Let $A_z^{(n)}$ denote the cocycle over $z$ generated by these matrices. 
If $y$ is a $\mu_p$-generic point, then $r(y)$ is $\mu_{1-p}$-generic and
$z=\pi_2(y)=\pi_2(r(y))$ is $\nu_p$-generic. We can verify that if $y_0\ldots y_n$
starts and ends with even symbols (which implies that $z_0\ldots z_{n-1}$ has an
even number of $F$'s), then 
$$
A_z^{(n)}=\begin{pmatrix}(1+e^2)^Ee^O&0\\0&(1+e^2)^Oe^E\end{pmatrix},
$$
where $E$ denotes the number of even symbols in $y_0\ldots y_n$ and $O$ is the number
of odd symbols.
Similarly if $y_0\ldots y_{n}$ begins and ends with odd symbols
$$
A_z^{(n)}=\begin{pmatrix}(1+e^2)^Oe^E&0\\0&(1+e^2)^Ee^O\end{pmatrix}.
$$
Finally if $y_0\ldots y_n$ begins with an even symbol and ends with an odd symbol, or
begins with an odd symbol and ends with an even symbol then $A_z^{(n)}$ is respectively
$$
\begin{pmatrix}0&(1+e^2)^Ee^O\\(1+e^2)^Oe^E&0\end{pmatrix}
\text{ or }
\begin{pmatrix}0&(1+e^2)^Oe^E\\(1+e^2)^Ee^O&0\end{pmatrix}.\
$$
In all of these cases, we see that the exponential growth rates of the matrix product along the orbit
(and hence of the restriction of the Perron-Frobenius cocycle to $V$) are $\log\big((1+e^2)^pe^{1-p}\big)$
and $\log\big((1+e^2)^{1-p}e^p\big)$ as computed above. In the case where $p=\frac 12$, the two
exponents are both equal to $\log\big((1+e^2)^{\frac 12}e^{\frac 12}\big)$ as expected.

\section{Background}\label{sec:background}
In this section, we collect a number of theorems and definitions that we will need for the proof, as well
as setting out a number of related articles in the literature.

If $(Z,S)$ is a subshift, $\mathsf A(Z)$ is its \emph{alphabet} (so that $Z\subset \mathsf A(Z)^\Z$) and 
$\mathsf L(Z)$ denotes its \emph{language}, that is the set of all finite strings that appear in points of
$Z$. A point $z\in Z$ is said to be \emph{right transitive}
if $\{S^n(z):n\ge 0\}$ is dense in $Z$.

If $\pi\colon X\to Z$ is a factor map from a shift of finite type to a sofic shift and
$\nu$ is an ergodic invariant measure on $Z$, Petersen, Quas and Shin \cite{PQS}
established that the collection of ergodic invariant measures of relative maximal entropy is finite.
These measures are the relative equilibrium states
in the case where the potential function $\phi$ is taken to be 0. In the case
where the factor map $\pi$ is a one-block map (i.e. $\pi(x)_0$ depends only on $x_0$), 
they established that the 
number of ergodic measures of
relative maximal entropy over any ergodic invariant measure $\nu$ on $Z$
is bounded above by $\min_{j\in\mathsf A(Z)}|\rho^{-1}(j)|$, where 
$\rho$ is the symbol map giving rise to $\pi$. 
This result shows that,
in particular, the number of these measures is finite. The bound suffers from a failure to be invariant
under conjugacies. This deficiency was remedied and the bound improved in 
the paper \cite{MahsaAQ} of Allahbakshi and Quas, some ideas from which will play an
important role here. 

For $z\in Z$, if $x,x'\in \pi^{-1}z$, we say that $x$ \emph{transitions to} $x'$,
and write $x\to x'$, if for all $n$, there exists $\bar x\in\pi^{-1}z$
such that $\bar x_{-\infty}^n =x_{-\infty}^n$ and
$\bar x_m=x'_m$ for all sufficiently large $m$. We then define an equivalence relation on $\pi^{-1}z$ by
$x\leftrightarrow x'$ if $x\to x'$ and $x'\to x$. The equivalence classes are called \emph{transition classes} 
(a pigeonhole argument using the finite type property shows there are finitely many transition classes). 
Let $\mathcal T(z)$ denote the collection of transition classes over $z$. 
The paper \cite{MahsaAQ} establishes that the number of transition classes over any right 
transitive point $z\in Z$
is a constant $c_\pi$ independent of $z$. This constant is called the \emph{class degree} of $\pi$.

\begin{thm}[Allahbakhshi and Quas \cite{MahsaAQ}]
Let $X$ be a shift of finite type and $Z$ be a sofic shift. Let $\pi\colon X\to Z$ be a one-block
factor map. There exists a word $W=w_0^{n-1}$ in $\mathsf L(Z)$, a position $0\le l<n$, and a subset $B\subset\mathsf A(X)$ whose cardinality is the class
degree $c_\pi$, so that for each element $u_0^{n-1}$ of $\pi^{-1}(W)$, 
there is a word $v_0^{n-1}\in \pi^{-1}(W)$ such that $u_0=v_0$, $u_{n-1}=v_{n-1}$ and
$v_l\in B$. 

The number of measures of relative maximal entropy over $\nu$ is bounded above by $c_\pi$.
\end{thm}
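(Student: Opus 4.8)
The plan is to establish the two assertions in turn --- the combinatorial ``transition block'' statement first, and then the bound on measures of relative maximal entropy as a consequence. For the combinatorial part I would work with the layered-graph picture of preimages: for a word $W=w_0^{n-1}\in\mathsf L(Z)$, the set $\pi^{-1}(W)$ of preimage words is exactly the set of directed paths in the graph with vertices $\{(k,a):0\le k<n,\ \pi(a)=w_k\}$ and edges $(k,a)\to(k+1,b)$ whenever $ab\in\mathsf L(X)$. For a pair $(p,q)$ joined by such a path and a level $0\le l<n$, write $R_l(W,p,q)=\{v_l:v\in\pi^{-1}(W),\ v_0=p,\ v_{n-1}=q\}$. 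In these terms the first assertion says that for some $(W,l)$ the family $\{R_l(W,p,q)\}_{(p,q)}$ admits a common transversal $B$ with $|B|=c_\pi$, since hitting every $R_l(W,p,q)$ is precisely the possibility of rerouting a preimage word of $W$, with its two endpoints unchanged, so that its symbol at position $l$ lies in $B$.

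To prove this I would fix a right transitive point $z$ and examine its long central subwords $W$. Transitions over $z$ are a finitary phenomenon: since $X$ is an SFT and $\pi$ is one-block, $x\to x'$ over $z$ holds iff for every $m$ a long enough word of $x$ ending at coordinate $m$ can be joined, by a path over a subword of $z$, to the corresponding word of $x'$ (the SFT memory then splices the bridge onto the tail of $x'$). Hence a long enough window of $z$ already records the merging of its preimages into the $c_\pi$ transition classes, and (i) no symbol at a single level of that window can sit on connecting paths belonging to two different classes, which after pigeonholing over the $c_\pi$ classes gives $|B|\ge c_\pi$ for any transversal; while (ii) once the window is long enough that all the merging has occurred inside it, the level $l$ minimising the transversal size should be a ``magic position'': at it the sets $R_l(W,p,q)$ degenerate to a structure thin enough --- a nested family, or all equal to a single set of $c_\pi$ ``route symbols'' --- that a common transversal of size exactly $c_\pi$ exists, and that transversal is the desired $B$. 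Claim (ii) is the step I expect to be the main obstacle: producing one block and one level at which the reachable-symbol sets collapse to a single small common structure is exactly the delicate combinatorics that makes the class degree a genuine conjugacy invariant rather than a crude count, and it is where a careful concatenation of transition witnesses is needed.

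For the entropy bound I would first reduce to $\nu$ fully supported: replacing $Z$ by the support of $\nu$ and $X$ by its preimage leaves the fibres and the transition relation untouched and can only lower the class degree, so it suffices to bound the number of ergodic measures of relative maximal entropy over a fully supported $\nu$ by the class degree of the restricted factor map. With $(W,l,B)$ as above and $\nu([W])>0$, fix a $\nu$-generic point $z$, along which $W$ recurs with frequency $\nu([W])$, and disintegrate a measure $\mu$ of relative maximal entropy as $\mu=\int\mu_z\,d\nu$; relative maximality forces $\mu_z$ to carry the full relative topological entropy on $\pi^{-1}(z)$. The transition block lemma says that at each occurrence of $W$ along $z$ every preimage word, after a boundary-preserving reroute, is funnelled through one of the $\le c_\pi$ symbols of $B$ at the corresponding copy of level $l$; feeding this into a Shannon--McMillan--Breiman count for the fibre entropy shows that for $\nu$-a.e.\ $z$ the measure $\mu_z$ lives on the union of the (at most $c_\pi$) transition classes over $z$, and that inside a single transition class a relative-maximal-entropy conditional measure is uniquely determined --- the usual unique-measure-of-maximal-entropy argument applied fibrewise inside a mixing component. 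Thus an ergodic measure of relative maximal entropy is pinned down by a shift-coherent choice of transition class over $z$, i.e.\ by an ergodic invariant measure of the bundle of transition classes lying over $\nu$; since that bundle is a bounded-to-one extension of $(Z,\nu)$ with at most $c_\pi$ transition classes over a $\nu$-generic point, it carries at most $c_\pi$ such measures, distinct ergodic relative-maximal-entropy measures are mutually singular and hence give distinct ones, and the bound $\le c_\pi$ follows. Granted the transition block lemma this last part is essentially bookkeeping; the genuine difficulty is the ``magic position'' claim above.
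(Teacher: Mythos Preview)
This theorem is not proved in the paper. It appears in the Background section as a cited result from Allahbakhshi and Quas \cite{MahsaAQ}, stated without proof and used later as an input to the arguments in Section~3. So there is no ``paper's own proof'' to compare your proposal against.

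That said, a brief comment on your sketch is perhaps useful. Your overall architecture is in line with the original source: the combinatorial existence of a minimal transition block with a set $B$ of representatives of size $c_\pi$ is indeed the heart of the matter, and you correctly identify claim~(ii) --- producing a single level at which the reachable-symbol sets collapse --- as the genuine obstacle. Your layered-graph picture and the plan to read the merging of transition classes from a long window of a right-transitive $z$ are the right framework. For the measure bound, your reduction to fully supported $\nu$ and the disintegration argument are reasonable, though the step ``inside a single transition class a relative-maximal-entropy conditional measure is uniquely determined'' is itself nontrivial and in the original is handled by a separate argument (this is essentially the class-degree-one case). If you want to complete the proof rather than sketch it, that uniqueness step and the precise ``magic position'' combinatorics are where the real work lies; the present paper simply imports both facts.
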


The bound on the number of measures of relative maximal entropy
was extended by Allahbakhshi, Antonioli and Yoo \cite{AAY} to the number of relative equilibrium states
of a H\"older continuous (or Bowen) potential function.

In the situation described in the above theorem, $W$ is called a 
\emph{minimal transition block}; $B$ is a set of \emph{representatives} and the word $u$
is said to be \emph{routed through} $v_l$. (The minimality in the name refers to the fact that 
the set of representatives is as small as possible). A pair of elements $x,x'$ of $X$ is said to be
\emph{mutually separated} if $x_n\ne x'_n$ for each $n$. Two subsets $S_1$ and $S_2$ 
of $X$ are mutually separated if for each $x\in S_1$ and $x'\in S_2$, $x$ and $x'$ are 
mutually separated.

\begin{thm}[Allahbakhshi, Hong and Jung \cite{AHJ}]\label{thm:AHJ}
Let $\pi\colon X\to Z$ be a one-block factor map from an irreducible two-sided shift of finite type $X$ to 
a two-sided sofic shift $Z$. If $z\in Z$ is right transitive, then the elements of
$\mathcal T(z)$ are mutually separated. In particular, for each copy of $W$ in $z$,
there exists a bijection between $\mathcal T(z)$ and $B$ so that for 
each $C\in\mathcal T(z)$, there exists a representative $s\in B$ such that 
each $x\in C$ may be routed through $s$ over that copy of $W$ and through no other element of $B$.
\end{thm}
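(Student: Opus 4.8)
The plan is to work in the setting of the paper, where $\pi$ is a one-block factor map, and (after a standard recoding) to take $X$ to be a one-step shift of finite type, so that words of $X$ may be glued wherever a single symbol matches. The basic tool is a minimal transition block $W=w_0^{N-1}$ furnished by the Allahbakhshi--Quas theorem, together with its set of representatives $B\subseteq\mathsf A(X)$ (with $|B|=c_\pi=|\mathcal T(z)|$) and routing position $l$. Fix a right-transitive $z\in Z$; since $z$ is right transitive, $W$ occurs in $z$ along coordinate windows $[p_k,p_k+N)$ with $p_k\to\infty$. The mutual-separation statement is the substantive part; once it is known, the ``in particular'' clause follows from it by a pigeonhole count.

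I would first record two elementary gluing observations. (i) If two preimages of $z$ agree on a left-infinite tail, or on a right-infinite tail, or off a finite window, then they lie in the same transition class: in each case one of the two points directly witnesses the condition defining ``$\to$'' at every level. In particular, rerouting a preimage $x$ at one window $[p_k,p_k+N)$ — replacing $x_{p_k}^{p_k+N-1}$ by a word of $\pi^{-1}(W)$ with the same first and last symbols passing through a symbol of $B$, as provided by Allahbakhshi--Quas — produces a point in the transition class of $x$ whose coordinate at $p_k+l$ lies in $B$. (ii) If two preimages $x,x'$ of $z$ can each be routed through the \emph{same} representative $s$ at the window $[p_k,p_k+N)$, then the two rerouting words share the symbol $s$ at position $p_k+l$; splicing the left part of the first with the right part of the second at this position yields a preimage of $z$ that agrees with $x$ on $(-\infty,p_k-1]$ and with $x'$ on $[p_k+N,\infty)$. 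Hence, if $x$ and $x'$ route through a common representative at infinitely many windows $[p_k,p_k+N)$, then $x\to x'$, and symmetrically $x'\to x$, so $x\leftrightarrow x'$.

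The core step is mutual separation: if $x,x'\in\pi^{-1}(z)$ satisfy $x_m=x'_m$ for some $m$, then $x\leftrightarrow x'$. Replacing $x'$ by the preimage of $z$ that agrees with $x$ on $(-\infty,m]$ and with $x'$ on $[m+1,\infty)$ — a point of the transition class of $x'$ by (i) — one may assume $x$ and $x'$ agree on the whole left tail $(-\infty,m]$. By (ii) it then suffices to show that such a pair routes through a common representative at every sufficiently late window $[p_k,p_k+N)$. This is precisely where the \emph{minimality} of the transition block is used, rather than just the bare existence of some rerouting set: one must unwind the construction of $(W,l,B)$ and track transition classes along $z$ to see that the $c_\pi$ classes of $z$ are ``persistently'' carried by the $c_\pi$ symbols of $B$ at the windows $[p_k,p_k+N)$, so that the representative through which a preimage routes at a late window is governed by its transition class and not by its behaviour just before the window; two preimages sharing a left tail cannot be pulled apart in this way, and so must route through a common representative. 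Establishing this persistence is the main obstacle; granting it, (ii) gives $x\leftrightarrow x'$, and mutual separation of distinct transition classes follows by contraposition.

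Finally, mutual separation yields the bijection. Fix a window $[p,p+N)$ of $W$ in $z$. For each $C\in\mathcal T(z)$, choosing any $x\in C$ and applying Allahbakhshi--Quas together with (i) shows that some member of $C$ has its $(p+l)$-coordinate in $B$, so the set $R(C)\subseteq B$ of representatives through which members of $C$ may be routed at this window is nonempty. If $C\ne C'$ and $s\in R(C)\cap R(C')$, then members of $C$ and of $C'$ share the symbol $s$ at coordinate $p+l$, contradicting mutual separation; hence the sets $R(C)$, $C\in\mathcal T(z)$, are pairwise disjoint. They are $|\mathcal T(z)|=c_\pi=|B|$ nonempty pairwise-disjoint subsets of a set of cardinality $c_\pi$, so each is a singleton $\{s_C\}$ and they partition $B$. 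Thus $C\mapsto s_C$ is a bijection $\mathcal T(z)\to B$, and since every rerouting of a member of $C$ passes through an element of $R(C)=\{s_C\}$, each $x\in C$ is routed through $s_C$ over that copy of $W$ and through no other element of $B$, as required.
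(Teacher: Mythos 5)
This statement is quoted from Allahbakhshi--Hong--Jung and is not proved in the paper, so there is no internal proof to compare against; judged on its own terms, your proposal has a genuine gap at its core. The peripheral parts are fine: observation (ii) (splicing two reroutings that pass through the same representative $s$ at position $p_k+l$, and using infinitely many such windows to witness $x\to x'$ and $x'\to x$) is correct, and your deduction of the ``in particular'' bijection from mutual separation by disjointness and pigeonhole is also correct. But the substantive content of the theorem is the mutual separation itself, and there you reduce to the claim that two preimages of $z$ agreeing on a left-infinite tail must route through a common representative at every sufficiently late copy of $W$ --- and then you write ``Establishing this persistence is the main obstacle; granting it\ldots''. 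That claim is essentially equivalent to the theorem (it is not a consequence of the mere existence of a minimal transition block: minimality gives $|B|=c_\pi$ and, via (ii) and pigeonhole, the Allahbakhshi--Quas bound $|\mathcal T(z)|\le c_\pi$, but it does not by itself tie the representative chosen at a late window to the transition class rather than to the local past). The actual AHJ argument requires a further, nontrivial construction to rule out two distinct classes sharing a symbol, and nothing in your sketch supplies it.

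A secondary error: in observation (i) you assert that two preimages agreeing on a \emph{left}-infinite tail lie in the same transition class because ``one of the two points directly witnesses $\to$ at every level.'' That is false as stated: to verify $x\to x'$ at level $n$ beyond the coordinate where the tails diverge, the definition requires a preimage $\bar x$ agreeing with $x$ up to $n$ and eventually equal to $x'$, which neither point supplies. (Your actual uses of (i) --- rerouting on a finite window, and replacing $x'$ by a splice that agrees with $x'$ on a \emph{right}-infinite tail --- only invoke the correct cases, so this does not propagate, but the left-tail clause should be deleted or justified differently.) In addition, even granting your ``persistence'' claim only for sufficiently late windows, you would obtain the bijection between $\mathcal T(z)$ and $B$ only over late copies of $W$; extending it to \emph{each} copy of $W$, as the theorem asserts, needs the full mutual separation, so the logical order of your last two paragraphs must be kept as you have it (separation first, then the bijection) and cannot be weakened.
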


\begin{thm}[Yoo  \cite{Yoo-Factor}]\label{thm:Yoo-factor}
Let  $X$ be an irreducible two-sided shift of finite type, $Z$ a two-sided sofic shift, and
$\pi\colon X\to Z$ be a continuous factor map. Then 
there is a sofic shift $Y$ and factorization of $\pi\colon X\to Z$ as a composition of factor maps,
$\pi_2\circ \pi_1$ where $\pi_1\colon X\to Y$ and $\pi_2\colon  Y\to Z$ with the properties that
$\pi_2$ is finite-to-one of degree $c_\pi$, the class degree of $\pi$
and $\pi_1$ is of class degree 1. 
\end{thm}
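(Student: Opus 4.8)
\emph{Overall strategy.} I would prove the theorem first in the special case that $\pi$ itself has class degree one — so that in the statement $Y=X$, $\pi_1=\pi$, $k=1$, $\nu_1=\nu$ and $m_1=1$ — and then reduce the general case to this one using Yoo's factorization (Theorem~\ref{thm:Yoo-factor}). In either case the analytic engine is the standard Ruelle transfer-operator machinery: the family $(\LL_z)$ satisfies uniform Lasota--Yorke / bounded-distortion estimates on $C^\beta(X^+)$, of the form $\|\LL_z^{(n)}f\|_\beta\le A\beta^n\|f\|_\beta+B_n\|f\|_\infty$ with $\|\LL_z^{(n)}\mathbf 1\|_\infty\le B_n$, the constants depending only on the H\"older data of $\phi$ and the finite-type constraint and hence uniform in $z$; consequently a semi-invertible multiplicative ergodic theorem applies and the cocycle admits an Oseledets splitting over $(Z,\nu)$. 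After replacing $\phi$ by $\phi-r_\pi(\nu)$, which multiplies every $\LL_z^{(n)}$ by the scalar $e^{-n\,r_\pi(\nu)}$ — shifting all Lyapunov exponents by the same amount and leaving the Oseledets splitting unchanged — we may assume $r_\pi(\nu)=0>\log\beta$, so that in the estimate above the term $B_n$, rather than the Lipschitz error $A\beta^n$, governs the top exponent.

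\emph{The class-degree-one case.} The function $a_n(z)=\log\|\LL_z^{(n)}\mathbf 1\|_\infty$ is subadditive along $\sigma$ (immediate from positivity of the operators and the cocycle identity), so Kingman's theorem gives $\tfrac1n a_n(z)\to\lambda$ for $\nu$-a.e.\ $z$, with $\lambda$ constant by ergodicity; the symbolic relative variational principle of Ledrappier and Walters identifies $\lambda$ with $r_\pi(\nu)$, and the distortion estimate promotes this to $\tfrac1n\log\|\LL_z^{(n)}\|_\beta\to r_\pi(\nu)$, so the leading exponent is $r_\pi(\nu)$. The substantive point is that the leading Oseledets space is one-dimensional, and for this I would establish a fibered (random) Ruelle--Perron--Frobenius theorem: a measurable family of probabilities $z\mapsto m_z$ on $X^+$ with $\LL_z^* m_{\sigma z}=\lambda_z m_z$ (the scalars $\lambda_z$ having geometric mean $e^{r_\pi(\nu)}$), a measurable family of strictly positive functions $z\mapsto h_z\in C^\beta(X^+)$ with $\LL_z h_z=\lambda_z h_{\sigma z}$, and a spectral-gap statement to the effect that, after renormalization by the leading growth, $\LL_z^{(n)}f$ converges in $C^\beta$ to a scalar multiple of $h_{\sigma^nz}$. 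Existence of such a family comes from a Birkhoff-cone contraction along $\nu$-generic orbits, and one-dimensionality of the leading space is equivalent to its uniqueness — equivalently, to uniqueness of the relative equilibrium state of $\phi$ over $\nu$, which is the $c_\pi=1$ instance of the bound of Allahbakhshi, Antonioli and Yoo \cite{AAY}, but we need it promoted to the level of the cocycle. The crucial input here is the minimal transition block of Allahbakhshi and Quas \cite{MahsaAQ}: over a right-transitive $z$ the block recurs at a sequence of times, and at each occurrence every $\pi$-preimage branch can be routed through a common representative, which supplies exactly the aperiodicity / full-support property that makes the cone contraction converge and that forbids the existence of a second conformal family.

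\emph{The general case.} Apply Theorem~\ref{thm:Yoo-factor} to write $\pi=\pi_2\circ\pi_1$ with $\pi_1\colon X\to Y$ of class degree one and $\pi_2\colon Y\to Z$ finite-to-one of degree $c_\pi$. Let $\mathcal N_l$ (for $l\in\mathsf A(Y)$) be the transfer operator on $C^\beta(X^+)$ attached to $\pi_1$ in exactly the way $\LL_j$ is attached to $\pi$ — sum over preimage symbols $i$ with $\pi_1(i)=l$. Then $\LL_j=\sum_{\{l\colon\pi_2(l)=j\}}\mathcal N_l$, and inductively $\LL_z^{(n)}=\sum_{\{w\colon\pi_2(w)=z_0^{n-1}\}}\mathcal N_w^{(n)}$, the sum running over the words of $Y$ lying over $z_0^{n-1}$. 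By the class-degree-one case applied to $\pi_1$ over each ergodic lift $\nu_i$, the cocycle $(\mathcal N_y^{(n)})$ over $(Y,\nu_i)$ has leading exponent $r_{\pi_1}(\nu_i)$ with one-dimensional leading space, spanned by a positive eigenfunction family concentrated, via its conformal measure, on the unique relative equilibrium state $\mu_i$ of $\phi$ over $\nu_i$ (existence of $\mu_i$ from upper semicontinuity of entropy, uniqueness from $c_{\pi_1}=1$ and \cite{AAY}). For $\nu$-a.e.\ $z$ the fibre $\pi_2^{-1}(z)$ consists of $c_\pi$ points (since $\deg\pi_2=c_\pi$), of which exactly $m_i$ follow $\nu_i$, in the sense that the corresponding $Y$-orbit equidistributes to $\nu_i$; this is what the multiplicities record, and is the source of $m_1+\dots+m_k=c_\pi$. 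Because $\pi_2$ is finite-to-one all lifts of $\nu$ share the entropy of $\nu$, so by Ledrappier--Walters, $r_\pi(\nu)=\sup\{r_{\pi_1}(\eta)\colon(\pi_2)_*\eta=\nu\}=\max_i r_{\pi_1}(\nu_i)$. Splitting $\LL_z^{(n)}=\sum_w\mathcal N_w^{(n)}$ according to which lift the branch $w$ follows, the branches following a maximal $\nu_i$ grow at rate $r_\pi(\nu)$ and all remaining contributions — other branches, and the Lipschitz error, which after normalization is of exponent $\log\beta<0$ — grow strictly slower; hence the leading exponent of $(\LL_z^{(n)})$ is $r_\pi(\nu)$, and each of the $m_i$ branches over $z$ following a maximal $\nu_i$ contributes one eigenfunction direction to the leading Oseledets space. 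This produces at least $\sum_{r_{\pi_1}(\nu_i)=r_\pi(\nu)}m_i$ directions; that they are independent, and that there are no more, follows from Theorem~\ref{thm:AHJ}: the transition classes over a right-transitive $z$ — which are precisely the $c_\pi$ branches — are mutually separated, so the eigenfunction families attached to distinct branches are supported on disjoint cylinders at every level and cannot cancel under the cocycle, while at the same time every vector of $C^\beta(X^+)$ decomposes along these $c_\pi$ branches, on each of which the cocycle is, in leading order, a copy of the one-dimensional $\mathcal N$-cocycle of the previous step.

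\emph{Main obstacle.} The crux is the class-degree-one case: proving the fibered Ruelle--Perron--Frobenius theorem and, within it, the \emph{uniqueness} of the conformal / eigenfunction family — equivalently, the one-dimensionality of the leading Oseledets space. This is the point at which the combinatorial structure (minimal transition blocks, mutual separation of transition classes) must be converted into the analytic ``aperiodicity'' hypothesis that drives the Birkhoff-cone contraction, and where one must control all the relevant constants uniformly in the random parameter $z$. A secondary difficulty, in the general case, is making rigorous the sense in which the $\LL$-cocycle decouples into $c_\pi$ independent branch-cocycles in leading order — tracking the branches through the recurrent occurrences of the minimal transition block and through the finite-to-one map $\pi_2$ — which is precisely where Theorem~\ref{thm:AHJ} does its work.
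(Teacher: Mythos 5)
Your proposal does not address the statement you were asked to prove. Theorem \ref{thm:Yoo-factor} is a purely combinatorial factorization result: given the factor map $\pi\colon X\to Z$, one must \emph{construct} an intermediate sofic shift $Y$ and factor maps $\pi_1\colon X\to Y$, $\pi_2\colon Y\to Z$ with $\pi=\pi_2\circ\pi_1$, $\pi_1$ of class degree one and $\pi_2$ finite-to-one of degree $c_\pi$. No measure, no potential, no transfer operator and no Lyapunov exponent appears in it. What you have sketched instead is a proof of Theorem \ref{thm:main} (your variables $\nu$, $k$, $\nu_i$, $m_i$ and the multiplicity count $\sum_{r_{\pi_1}(\nu_i)=r_\pi(\nu)}m_i$ all come from that statement), and in your ``general case'' you explicitly \emph{apply} Theorem \ref{thm:Yoo-factor} to obtain the factorization --- so, read as a proof of Theorem \ref{thm:Yoo-factor}, the argument is circular, and it establishes nothing about the factorization itself.

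For the record, the statement is quoted from Yoo \cite{Yoo-Factor} and the paper does not reprove it; but the proof of Theorem \ref{thm:main} does describe the construction you would need. One takes a minimal transition block $W=w_0^{n-1}$ in $\mathsf L(Z)$ with routing position $l$ and representative set $B\subset\mathsf A(X)$ of cardinality $c_\pi$ (Allahbakhshi--Quas), sets $\mathsf A(Y)=\mathsf A(Z)\times(B\cup\{\star\})$, and defines $\pi_1(x)_m=(\pi(x)_m,s)$ when $\pi(x)_{m-l}^{m-l+n-1}=W$ and $x_{m-l}^{m-l+n-1}$ is routable through $s$, and $\pi_1(x)_m=(\pi(x)_m,\star)$ otherwise; $\pi_2$ is projection onto the first coordinate and $Y=\pi_1(X)$. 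The substance of a proof is then (i) that this is well defined, i.e.\ that over a right-transitive point each preimage is routable through exactly one representative at each occurrence of $W$, which is the content of Theorem \ref{thm:AHJ}; (ii) that $\pi_2$ is finite-to-one of degree exactly $c_\pi$, which follows from the bijection between $\mathcal T(z)$ and $B$; and (iii) that $\pi_1$ has class degree one, i.e.\ that two preimages of a right-transitive $y\in Y$, being routed through the same representatives at every occurrence of $W$, lie in a single transition class. None of these steps appears in your write-up. If you intended your text as a proof of Theorem \ref{thm:main}, it should be submitted and evaluated against that theorem instead; as a proof of Theorem \ref{thm:Yoo-factor} it must be started over.
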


\begin{thm}[Yoo \cite{Yoo-multiplicity}]\label{thm:Yoo-mult}
Let $\pi$ be a finite-to-one continuous factor map from a homeomorphism $S$ of a 
compact metric space $Y$ to a homeomorphism $T$ of a compact metric space $Z$.
Suppose that $\nu$ is an ergodic $T$-invariant measure.
Then:

\begin{itemize}
\item there exists $d\in\N$ such that for $\nu$-a.e.\ $z\in Z$, $|\pi^{-1}z|=d$;
\item there are only finitely many ergodic measures $\mu_1,\ldots,\mu_k$ on $Y$ 
such that $\pi_*\mu_i=\nu$ and $k\le d$;
if $\nu$ is fully supported, then so are the $\mu_1,\ldots,\mu_k$;
\item there exist multiplicities $m_1,\ldots,m_k$ whose sum is $d$.
\end{itemize}
In the case where $Y$ is a shift space and $\pi$ is a one-block map, there exists a joining
$\bar\mu$ on $Y^d$ such that for 
$\bar\mu$-a.e.\ $(y^1,\ldots,y^d)$, $\pi(y^1)=\ldots=\pi(y^d)$; the $y^i$ are mutually separated;
and $y^{M_i+1},\ldots,y^{M_i+m_i}$ are generic for $\mu_i$, where $M_i=m_1+\ldots+m_{i-1}$. 
\end{thm}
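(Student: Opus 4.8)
The plan is to identify the ergodic lifts of $\nu$ with the ergodic components of a single ``fiber--equidistributed'' invariant measure $\mu^*$ on $Y$, to read the multiplicities off the way the fiber disintegrations of those components split the finite fibers of $\pi$, and finally to assemble the degree joining as an ergodic lift of $\nu$ to the $d$-fold fiber product. For the constant $d$: as $\pi$ is finite-to-one, $N(z):=|\pi^{-1}z|$ is finite for every $z$, and for each $n$ the set $\{N\ge n\}$ is the projection to $Z$ of the Borel set of $n$-tuples of distinct points of $Y$ with a common $\pi$-image, hence analytic and $\nu$-measurable; since $S$ is a homeomorphism intertwining $\pi$ with $T$ it sends $\pi^{-1}z$ bijectively onto $\pi^{-1}(Tz)$, so $N$ is $T$-invariant, and ergodicity of $\nu$ gives $N=d$ for $\nu$-a.e.\ $z$ with $d$ finite.

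For the lifts, for $\nu$-a.e.\ $z$ set $\mu^*_z=\tfrac1d\sum_{y\in\pi^{-1}z}\delta_y$ and $\mu^*=\int\mu^*_z\,d\nu(z)$, which is $S$-invariant (because $S$ permutes fibers and $\mu^*_z$ is fiberwise uniform) with $\pi_*\mu^*=\nu$. I would first check that the ergodic $S$-invariant measures $\mu$ on $Y$ with $\pi_*\mu=\nu$ are exactly the ergodic components of $\mu^*$: if $\mu$ is such a component then $\pi_*\mu$ is an ergodic component of $\pi_*\mu^*=\nu$, hence $\nu$; conversely an ergodic lift $\mu$ of $\nu$ has fiber disintegration $\mu_z$ carried by the finite set $\pi^{-1}z$, so $\mu_z\ll\mu^*_z$, whence $\mu\ll\mu^*$, and an ergodic measure absolutely continuous with respect to an invariant one is one of its ergodic components. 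The set of ergodic lifts is finite: for two distinct ergodic lifts the sets of generic points are disjoint Borel $S$-invariant sets of full measure, so the corresponding fiber measures have disjoint supports inside $\pi^{-1}z$ for $\nu$-a.e.\ $z$, and $\pi^{-1}z$ has only $d$ points. Writing $\mu^*=\sum_{i=1}^k c_i\mu_i$ ($k\le d$) for the ergodic decomposition, the supports $\mathrm{supp}(\mu_i)_z$ are pairwise disjoint and their union carries all of $\mu^*_z$, which charges each of the $d$ points of $\pi^{-1}z$; so they partition $\pi^{-1}z$, and comparing masses in $\sum_i c_i(\mu_i)_z=\mu^*_z$ gives $|\mathrm{supp}(\mu_i)_z|=dc_i=:m_i$, a constant, with $m_1+\cdots+m_k=d$.

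For the degree joining, let $W\subset Y^d$ be the Borel set of tuples $(y^1,\dots,y^d)$ of distinct points with $\pi y^1=\cdots=\pi y^d$; it is invariant under $\hat S=S\times\cdots\times S$ and the map $p(y^1,\dots,y^d)=\pi y^1$ is a factor map onto $Z$ whose fiber over $\nu$-a.e.\ $z$ is the set of $d!$ orderings of $\pi^{-1}z$. Choosing $\hat\mu_z$ uniform on those orderings shows $W$ carries an $\hat S$-invariant measure with $p$-image $\nu$; let $\bar\mu$ be an ergodic component of it (still with $p_*\bar\mu=\nu$ and supported on $W$). Each coordinate marginal $(\mathrm{pr}_i)_*\bar\mu$ is an ergodic lift of $\nu$, hence some $\mu_{j(i)}$; summing these at the fiber level, $\sum_i(\mathrm{pr}_i)_*\bar\mu_z=d\,\mu^*_z=\sum_j m_j(\mu_j)_z$ because each point of $\pi^{-1}z$ occurs in exactly one coordinate of a $\bar\mu_z$-typical tuple, and disjointness of the supports of the $(\mu_j)_z$ forces $\#\{i:j(i)=j\}=m_j$ for each $j$. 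Applying a fixed permutation of the coordinates, which carries $\bar\mu$ to another ergodic $\hat S$-invariant lift of $\nu$ on $W$, we may arrange that coordinates $M_i+1,\dots,M_i+m_i$ all have marginal $\mu_i$; Birkhoff's theorem for the ergodic measure $\bar\mu$ then gives that $\bar\mu$-a.e.\ $(y^1,\dots,y^d)$ has $y^{M_i+1},\dots,y^{M_i+m_i}$ generic for $\mu_i$, while $\pi y^1=\cdots=\pi y^d$ and distinctness are built into $W$. When $Y$ is a shift space the stronger mutual separation of the $y^i$ follows from the structure of finite-to-one factor maps: restricting to the transitive subsystem generated by a $\nu$-generic point, fiber points of a finite-to-one factor map over a transitive point are mutually separated.

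The step I expect to be the main obstacle is the fiber-disintegration bookkeeping threaded through the middle two paragraphs: the $\nu$-a.e.\ uniqueness and $S$-equivariance of disintegrations, their additivity along the ergodic decomposition of $\mu^*$, and the passage from the uniform measure on $W$ to an ergodic one while keeping exact control of every coordinate marginal. The mutual-separation statement in the shift-space case is the other non-formal ingredient, since it rests on the structure theory of finite-to-one factor maps rather than on soft measure theory.
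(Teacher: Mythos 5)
The paper gives no proof of this statement — it is quoted verbatim from Yoo \cite{Yoo-multiplicity} as background — so your argument has to stand on its own. Most of it does, and cleanly: the a.e.\ constancy of the fibre cardinality, the identification of the ergodic lifts of $\nu$ with the ergodic components of the fibrewise-uniform lift $\mu^*$, the extraction of the multiplicities from $m_i=dc_i=|\mathrm{supp}\,(\mu_i)_z|$ (forced to be an integer by the uniformity of $\mu^*_z$), and the construction of the degree joining as an ergodic component of the uniform measure on orderings of the fibre, with the coordinate-marginal count $\#\{i\colon j(i)=j\}=m_j$ pinned down by disjointness of the fibre supports, are all correct.

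The genuine gap is the mutual separation clause, which you dispatch in one sentence by asserting that fibre points of a finite-to-one factor map over a transitive point are mutually separated. At the stated level of generality (an arbitrary shift space $Y$) that assertion is false: take $Y=\{0^\infty,(01)^\infty,(10)^\infty\}\subset\{0,1\}^{\Z}$, let $Z$ be a fixed point, $\pi$ the constant map and $\nu$ the point mass. Then $\pi$ is finite-to-one with $d=3$, but $0^\infty$ and $(01)^\infty$ agree in infinitely many coordinates, so no ergodic degree joining with mutually separated coordinates exists. Mutual separation of fibres is a theorem about finite-to-one codes on \emph{irreducible} sofic shifts (via the no-diamond/magic-word machinery), and your fallback of ``restricting to the transitive subsystem generated by a $\nu$-generic point'' does not repair this: the orbit closure of a single preimage $y^i$ need not contain the other preimages, and $\pi^{-1}(\mathrm{supp}\,\nu)$ need not be transitive. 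So you must either import the irreducibility hypotheses under which Yoo's result actually lives (they do hold in this paper's application, where $Y$ is the irreducible sofic intermediate shift) or supply the separation argument explicitly. Since mutual separation is precisely the property the main theorem's proof leans on — it is what makes the cross terms vanish when $\LL^{(n)}_{\bar\pi(\bar y)}$ is expanded as a sum over routings — it cannot be waved through.
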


The joining $\bar\mu$ constructed in the above theorem is called an \emph{ergodic degree joining}.

The following theorem gives a criterion for simplicity of the top Lyapunov exponent of an operator
cocycle based on contraction of cones and Birkhoff's theorem on contraction of the Hilbert metric.
Recall that a \emph{cone} is a closed subset $\mathcal C$ of a real Banach space $B$ that is closed 
under addition and scalar multiplication by a non-negative real number.

For $f,g\in\mathcal C$, let $m(f,g)=\sup\{t\ge 0\colon f-tg\in \mathcal C\}$ and let
$M(f,g)=\inf\{s\ge 0\colon sg-f\in\mathcal C\}$. The projective distance between two 
points in the cone is defined as $\Theta_{\mathcal C}(f,g)=\log (M(f,g)/m(f,g))$. (Note that this is not
a metric as it may be infinite; also $\Theta_{\mathcal C}(\beta f,\gamma g)=
\Theta_{\mathcal C}(f,g)$ for all $\beta,\gamma>0$). The \emph{diameter}
of a subset $S$ of $\mathcal C$ is $\sup_{f,g\in S\setminus\{0\}}\Theta_{\mathcal C}(f,g)$.
A cone is said to be \emph{$D$-adapted} if whenever $f\in B$ and $g\in\mathcal C$,
then $g\pm f\in\mathcal C$ implies $\|f\|\le D\|g\|$.

\begin{thm}[Horan \cite{Horan}, Theorem 2.14]\label{thm:Joseph}
Let $Y$ be a compact metric space and $S\colon Y\to Y$ be a continuous 
invertible transformation. 
Let $\nu$ be an ergodic $S$-invariant Borel probability measure on $Y$. 
Let $B$ be a Banach space and let $\mathcal C$ be a $D$-adapted cone in $B$ 
such that $\mathcal C-\mathcal C=B$, $\mathcal C\cap (-\mathcal C)=\{0\}$.

Suppose that for each $y\in Y$, $\LL_y$ is a linear operator from 
$B$ to $B$ such that $y\mapsto\LL_y$
is continuous (where the linear operators on $B$ are equipped with the norm topology),
that $\LL_y(\mathcal C)\subset\mathcal C$ for each $y$
and that there
is a measurable subset $A\subset Y$ with $\nu(A)>0$ and an $n>0$ such that 
$\diam(\LL^{(n)}_y\mathcal C)<\infty$
for all $y\in A$. 

Then the leading Lyapunov exponent of the cocycle $(\LL^{(n)}_y)_{y\in Y}$ is simple.
That is there exist $\alpha>\beta$, a measurable function $v\colon Y\to B$
and a measurable function $\psi\colon Y\to B^*$ such that
$\LL_y(v(y))$ is a multiple of $v(S(y))$; $\frac 1n\log\|\LL_y^{(n)}v(y)\|\to\alpha$
a.e.; and $\limsup_{n\to\infty}\frac 1n\|\log\LL_y^{(n)}w\|\le\beta$ whenever $w\in\ker\psi(y)$.
\end{thm}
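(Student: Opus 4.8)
The plan for Theorem~\ref{thm:Joseph} is to combine Birkhoff's theorem on the contraction of the Hilbert metric with the Birkhoff ergodic theorem to produce, for $\nu$-a.e.\ $y$, a single equivariant direction inside the cone, and then to use the $D$-adapted hypothesis together with $\mathcal C\cap(-\mathcal C)=\{0\}$ to show that this direction is \emph{all} of the top Oseledets space.

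The first step is a routine reduction to a uniform estimate: since the sets $\{y\in A:\diam(\LL_y^{(n)}\mathcal C)\le m\}$ increase to $A$ as $m\to\infty$, some such set $A'$ has $\nu(A')>0$ and $\diam(\LL_y^{(n)}\mathcal C)\le\Delta<\infty$ on $A'$; put $\kappa=\tanh(\Delta/4)<1$. Birkhoff's theorem then gives that every $\LL_y$, being a positive map, is non-expanding for $\Theta_\mathcal C$, and that $\LL_y^{(n)}$ contracts $\Theta_\mathcal C$ by a factor at most $\kappa$ whenever $y\in A'$. Now introduce the pull-back cones $\mathcal C_N(y):=\LL_{S^{-N}y}^{(N)}\mathcal C$. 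Since $\LL_{S^{-N-1}y}\mathcal C\subseteq\mathcal C$, the identity $\LL_{S^{-N-1}y}^{(N+1)}=\LL_{S^{-N}y}^{(N)}\circ\LL_{S^{-N-1}y}$ shows these cones are nested decreasing in $N$; and because the ergodic theorem (applied to the ergodic map $S^{-1}$) gives that $\nu$-a.e.\ $y$ has $S^{-j}y\in A'$ for a set of $j\in[0,N)$ of density $\nu(A')$, one extracts of order $\nu(A')N$ disjoint length-$n$ contracting windows inside the composition $\LL_{S^{-N}y}^{(N)}$ and concludes $\diam_{\Theta_\mathcal C}\!\bigl(\mathcal C_N(y)\bigr)\le\Delta\kappa^{\vartheta N}$ for $\nu$-a.e.\ $y$ and all large $N$, where $\vartheta>0$ depends only on $\nu(A')$ and $n$.

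The second step turns this projective contraction into norm convergence via the $D$-adapted property. If $f,g\in\mathcal C$ with $\Theta_\mathcal C(f,g)\le\varepsilon$ and $t=m(f,g)$, then $f-tg$ and $(M(f,g)-t)g$ both lie in $\mathcal C$, hence so does their sum, so $D$-adaptedness yields $\|f-tg\|\le D(e^\varepsilon-1)\|tg\|$, which after normalising reads $\bigl\|\,f/\|f\|-g/\|g\|\,\bigr\|=O(\varepsilon)$. Applying this along the shrinking cones $\mathcal C_N(y)$, the normalised vectors $\LL_{S^{-N}y}^{(N)}f_0\big/\|\LL_{S^{-N}y}^{(N)}f_0\|$ form a norm-Cauchy sequence; its limit $v(y)\in\mathcal C$ is a unit vector, independent of the choice of $f_0\in\mathcal C\setminus\{0\}$, measurable in $y$, and equivariant: $\LL_yv(y)=c(y)v(Sy)$ with $c(y)=\|\LL_yv(y)\|>0$ a.e. Set $\lambda=\int\log c\,d\nu$. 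Since $\LL_y^{(N)}\mathcal C=\mathcal C_N(S^Ny)$, every $g\in\mathcal C$ satisfies $\Theta_\mathcal C\bigl(\LL_y^{(N)}g,\,v(S^Ny)\bigr)\le\Delta\kappa^{\vartheta N}$, so $\|\LL_y^{(N)}g\|=e^{\lambda N+o(N)}$; as $\mathcal C-\mathcal C=B$ this forces $\lambda$ to be the largest Lyapunov exponent of the cocycle and $v(y)$ to lie in the top Oseledets space $E_1(y)$.

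The final and hardest step is simplicity, i.e.\ $E_1(y)=\langle v(y)\rangle$. The difficulty is that the alignment of $\LL_y^{(N)}\mathcal C$ with $v(S^Ny)$ above is only up to subexponential errors, so one cannot simply quotient out the line $\langle v(y)\rangle$ and read off the answer; what is needed is an equivariant complement on which the cocycle grows at a rate strictly below $\lambda$. The plan is to run the same cone machinery on the adjoint cocycle $\LL_y^{*}$ acting on the dual cone $\mathcal C^{*}\subset B^{*}$ (which is again a proper adapted cone because $\mathcal C-\mathcal C=B$ gives, via the open mapping theorem, a uniform bound on cone decompositions of the unit ball, and $\mathcal C\cap(-\mathcal C)=\{0\}$ makes $\mathcal C^{*}$ total); the adjoint property of Birkhoff's theorem transfers the diameter bound, so exactly as before one obtains an equivariant functional $\ell(y)\in\mathcal C^{*}$ with $\LL_y^{*}\ell(Sy)=c'(y)\ell(y)$. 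Then $B=\langle v(y)\rangle\oplus\ker\ell(y)$ is an equivariant splitting, $\LL_y^{(N)}$ maps $\ker\ell(y)$ into $\ker\ell(S^Ny)$, and for $h\in\ker\ell(y)$ the identity $\ell(S^Ny)\bigl(\LL_y^{(N)}h\bigr)=0$, combined with the collapse of $\LL_y^{(N)}\mathcal C$ onto the direction $v(S^Ny)$, forces $\|\LL_y^{(N)}h\|\le e^{(\lambda-\alpha)N+o(N)}$ for some fixed $\alpha>0$; this is the spectral gap, and it yields $\dim E_1(y)=1$. I expect the main obstacle to be this last estimate: pinning $\|\LL_y^{(N)}h\|$ below $e^{(\lambda-\alpha)N}$ requires knowing that the pairing $\ell(S^Ny)\bigl(v(S^Ny)\bigr)$ does not itself decay exponentially along $\nu$-a.e.\ orbit, which is an ergodic-theorem statement contingent on a log-integrability check. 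Should the dual cone turn out to be awkward to handle, an alternative is to argue directly inside the semi-invertible Oseledets splitting, exploiting that on $E_1$ the cocycle is conformal up to subexponential factors and that a linear isomorphism cannot carry a subspace of dimension $\ge 2$ into an $o(1)$-neighbourhood of a single line.
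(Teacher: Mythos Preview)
The paper does not prove Theorem~\ref{thm:Joseph}; it is quoted from Horan~\cite{Horan} as a background result, so there is no in-paper proof to compare your proposal against. Your sketch is nevertheless the standard Birkhoff-contraction strategy (nested image cones $\to$ equivariant ray $v(y)$, then a dual-cone functional $\ell(y)$ giving an equivariant complement with a spectral gap), and is broadly in line with how such results are established in the literature; the points you flag as delicate---the log-integrability of the pairing $\ell(v)$ and the handling of the dual cone in the infinite-dimensional, merely $D$-adapted setting---are indeed the places where real work is needed, and you would have to consult Horan's paper for the precise hypotheses under which these go through.
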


This theorem should be thought of as a skew product version of the Perron-Frobenius theorem.

We will use the relative variational principle of Ledrappier and Walters \cite{LedrappierWalters}.
Recall the Bowen definition of pressure:
$$
P(\phi)=
\lim_{\epsilon\to 0}\limsup_{n\to\infty} \frac 1n\log\sup_E\sum_{x\in E}e^{S_n\phi(x)},
$$
where the supremum is taken over $(n,\epsilon)$-separated sets, that is sets $E$ such that 
for any distinct elements $x,x'$  of $E$, there is $0\le j<n$ such that 
$d_\beta(T^jx,T^jx')\ge\epsilon$. In the case of shift spaces, this may be
simplified, fixing $\epsilon$ to be 1 and taking $E$ to be any set consisting of 
exactly one point in each cylinder set of length $n$ (so that $E$ has the same
cardinality as $\mathcal L_n(X)$). For symbolic systems,
$$
P(\phi)=\limsup_{n\to\infty}\frac 1n\log\sum_{x\in E}e^{S_n\phi(x)},
$$
where $E$ is any set with one representative of each cylinder set of length $n$.
This definition is further refined by restricting the elements of $E$ to lie in a fixed subset $K$:
$$
P(\phi,K)=\limsup_{n\to\infty}\frac 1n\log\sum_{\omega\in E\subset K}e^{S_n\phi(\omega)},
$$
where $E$ is any maximal $(n,1)$-separated collection of points of $K$.
We define $p_n(\phi,K)=\sup_{E\subset K;\ (n,1)\text{-separated}}
\sum_{\omega\in E}e^{S_n\phi(\omega)}$
so that 
$$
P(\phi,K)=\limsup_{n\to\infty}\frac 1n\log p_n(\phi,K).
$$ 

\begin{thm}[Relative Variational Principle]\label{thm:relVP}
Let $T\colon X\to X$ and $S\colon Y\to Y$ be continuous dynamical systems on compact spaces;
let $\nu$ be an ergodic invariant measure for $S$ and let $\pi\colon X\to Y$ be a continuous factor
map from $(X,T)$ to $(Y,S)$. 
Then for $\nu$-a.e.\ $y$, $P(\phi,\pi^{-1}y)=r_{\pi}(\nu)$.
\end{thm}

\section{Proofs}
In this section, we start with some preliminary lemmas and
then establish Theorem \ref{thm:cd1case} (which is 
the special case of the main theorem in the case where $\pi$ has class degree 1), before using it to
prove the main theorem.

The proof structure is as follows. We start with a factor map $\pi\colon X\to Z$ and an ergodic invariant measure
$\nu$ on $Z$. Given a $\nu$-typical point $z\in Z$,
its preimages in $X$ can be separated into a number of transition classes as described in the previous section. 
Those results show that one can associate pressure-maximizing measures on $X$ to these classes, 
and that $\nu$-a.e.\ $z$ gives rise to the same collection of measures on $X$. 
Theorem \ref{thm:cd1case} deals with the case where the class degree is 1 
(so there is a single transition class). Some preparatory lemmas show that the 
the cocycle of operators maps a family of cones inside itself, and from time to time maps a cone in the family into a finite diameter
sub-cone of the cone in the family. This allows us to apply Theorem \ref{thm:Joseph} showing that there is a simple leading Lyapunov
exponent. A calculation shows that this exponent is the quantity appearing on the left side of the equality in 
the Relative Variational Principle (while the conclusion of Theorem \ref{thm:cd1case}
is that the exponent is the right side of
the equality). To deal with the case of class degree greater than 1, we express $\pi$ as $\pi_2\circ\pi_1$ 
as in Theorem \ref{thm:Yoo-factor}, and express the Perron-Frobenius cocycle as a sum of non-interacting cocycles,
each of which satisfies the hypotheses of Theorem \ref{thm:cd1case}, with one summand per transition class. 

Finally, in the case where $\phi$ is locally constant, there is a corresponding family of locally constant functions
that is mapped into itself by the Perron-Frobenius cocycle. We show that this family intersects each of the cones 
described above, so that the multiplicity of the top Lyapunov exponent is captured by the action on this finite-dimensional subspace. 

For this section, let $\phi$ be a fixed H\"older continuous function. 
Given $\beta<1$, we define a semi-norm  on $C^\beta(X^+)$ by
$|f|_\beta=\sup_{x\ne x'}|f(x)-f(x')|/d_\beta(x,x')$ (that is the Lipschitz constant of $f$ with respect
to $d_\beta$) and a norm by
$\|f\|_\beta=\max(\|f\|_\infty,|f|_\beta)$. 
Let $\beta$ be such that $\|\phi\|_\beta<\infty$. This quantity will be fixed from here on.
We also assume throughout this section that the factor map $\pi$ is a one-block map as this is the 
context in the proof of the main theorem.

We define a family of cones, one for each real $a>0$, by 
$$
\mathcal C_a=
\left\{f\in C^\beta(X^+)\colon f\ge 0; f(x')\le e^{ad_\beta(x,x')}f(x)\text{ whenever $x_0=x'_0$}
\right\}.
$$
These cones are widely used in symbolic dynamics and appear, for instance, in the work of Parry and 
Pollicott \cite{ParryPollicott}, although our usage differs slightly as we do not impose any condition
on $f(x)/f(x')$ when $x_0\ne x_0'$. This is important for us, since some operators that we consider
yield functions that are 0 on part of $X^+$.

\begin{lem}\label{lem:conetocone}
Let $a$ be large enough that
$b:=\beta(a+|\phi|_\beta)<a$.
Then $\LL_j\mathcal C_a\subset \mathcal C_b$ for each 
$j\in\mathsf A(Y)$.
\end{lem}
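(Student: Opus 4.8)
The plan is to verify the two defining properties of $\mathcal{C}_b$ directly for $\LL_j f$ when $f \in \mathcal{C}_a$. Positivity is immediate: since $\phi$ is real-valued, $e^{\phi(ix)} > 0$, and $f(ix) \ge 0$ for every admissible preimage symbol $i$, so $\LL_j f \ge 0$. The real content is the distortion bound, so I would fix $x, x' \in X^+$ with $x_0 = x_0'$ and estimate $\LL_j f(x) / \LL_j f(x')$.

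The key observation is that when $x_0 = x_0'$, the two index sets coincide: $\{i : ix \in X^+, \pi(i) = j\} = \{i : ix' \in X^+, \pi(i) = j\}$, because admissibility of $ix$ depends only on the pair $(i, x_0)$ in a shift of finite type with memory one (which one may assume after recoding), and $\pi(i) = j$ does not involve $x$ at all. So $\LL_j f(x)$ and $\LL_j f(x')$ are sums over the \emph{same} finite set of terms, and it suffices to bound each ratio $e^{\phi(ix)} f(ix) \big/ e^{\phi(ix')} f(ix')$ and then use the elementary fact that if $\sum a_i$ and $\sum c_i$ are sums of positive terms with $a_i/c_i \le R$ for all $i$, then $(\sum a_i)/(\sum c_i) \le R$. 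For a single term: note $(ix)_0 = i = (ix')_0$, so $f$ being in $\mathcal{C}_a$ gives $f(ix)/f(ix') \le e^{a\, d_\beta(ix, ix')}$; and $e^{\phi(ix) - \phi(ix')} \le e^{|\phi|_\beta d_\beta(ix, ix')}$ from the Lipschitz bound on $\phi$. Finally, $d_\beta(ix, ix') = \beta\, d_\beta(x, x')$, since prepending the common symbol $i$ shifts the first disagreement one place to the right. Combining, each term ratio is at most $e^{(a + |\phi|_\beta)\beta\, d_\beta(x,x')} = e^{b\, d_\beta(x,x')}$, hence $\LL_j f(x)/\LL_j f(x') \le e^{b\, d_\beta(x,x')}$, which is exactly the $\mathcal{C}_b$ condition. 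The hypothesis $b < a$ is not needed for this inclusion itself but records that iterating drives the distortion constant down toward the fixed point $b_\infty = \beta|\phi|_\beta/(1-\beta)$; I would mention this in passing.

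One should also check $\LL_j f \in C^\beta(X^+)$, i.e.\ that it is genuinely Lipschitz and not merely bounded — but this follows from the same single-term estimates together with finiteness of the alphabet, since $\LL_j f$ is a finite sum of compositions of Lipschitz functions with the Lipschitz map $x \mapsto ix$. I do not anticipate a serious obstacle here; the only mild subtlety is being careful that the index set really is independent of $x$ within a fixed symbol $x_0$, which is where the finite-type assumption (after passing to a one-step presentation) is used, and handling the degenerate case where $\LL_j f$ vanishes at some points — but that is harmless since the ratio condition in $\mathcal{C}_b$ is only imposed implicitly through nonnegativity and the bound, and a zero numerator causes no problem while a zero denominator with nonzero numerator cannot occur (the index sets agreeing forces $\LL_j f(x) = 0 \iff \LL_j f(x') = 0$ when $x_0 = x_0'$).
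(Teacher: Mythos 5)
Your proof is correct and follows essentially the same route as the paper's: decompose $\LL_j$ into single-symbol terms $\tilde\LL_i f(x)=e^{\phi(ix)}f(ix)$, bound each term ratio by $e^{(a+|\phi|_\beta)\beta\,d_\beta(x,x')}$ using the cone condition on $f$ together with the H\"older bound on $\phi$ and $d_\beta(ix,ix')=\beta\,d_\beta(x,x')$, and conclude via the mediant inequality for the sum. Your explicit remarks about the index set being determined by $x_0$ (the one-step SFT assumption) and about zero values are points the paper leaves implicit, but the substance of the argument is identical.
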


\begin{proof}
Let $f\in \mathcal C_a$. For each symbol $i\in \mathsf A(X)$, set $\tilde\LL_if(x)=e^{\phi(ix)}f(ix)$. 
Suppose $x$ and $x'$ agree for $n$ symbols for some $n\ge 1$ and suppose $f(ix')>0$ (so that
$f(ix)>0$ also). Then 
\begin{align*}
\frac{\tilde\LL_i f(x')}{\tilde\LL_if(x)}&=\frac{e^{\phi(ix')}f(ix')}{e^{\phi(ix)}f(ix)}\\
&\le e^{|\phi|_\beta\beta^{n+1}}e^{a\beta^{n+1}}\\
&\le e^{\beta(|\phi|_\beta+a)d_\beta(x,x')}=e^{bd_\beta(x,x')}.
\end{align*}
Since for $j\in \mathsf A(Y)$, $\LL_j=\sum_{i\in\pi^{-1}j}\tilde\LL_i$ (where $\pi^{-1}j$ denotes
the symbols in $\mathsf A(X)$ that map to $j$ under the alphabet map defining $\pi$), the result follows. 
\end{proof}

\begin{lem}\label{lem:normcomp}
For $f\in \mathcal C_a$, $\|f\|_\beta \le \max(3,1+ae^a)\|f\|_\infty$.
It follows that $\mathcal C_a$ is $D$-adapted with $D=\max(6,2+2ae^a)$.
\end{lem}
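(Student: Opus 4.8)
The plan is to bound the Lipschitz seminorm $|f|_\beta$ of an element $f\in\mathcal C_a$ directly in terms of $\|f\|_\infty$, by splitting the supremum that defines $|f|_\beta$ according to whether the two arguments agree in coordinate $0$, and then to deduce the $D$-adaptedness as a formal consequence of that comparison.

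First I would take a pair $x\ne x'$ with $x_0=x'_0$. Then the first disagreement occurs at a coordinate $\ge 1$, so $d_\beta(x,x')\le\beta<1$. Assuming without loss of generality that $f(x)\ge f(x')$, the defining inequality of $\mathcal C_a$ in the form $f(x)\le e^{a d_\beta(x,x')}f(x')$ (which also forces $f(x)=0$ when $f(x')=0$) gives $|f(x)-f(x')|\le (e^{a d_\beta(x,x')}-1)\|f\|_\infty$; combining this with the elementary estimate $e^t-1\le t e^t$ for $t\ge 0$ and with $a\,d_\beta(x,x')\le a$ yields $|f(x)-f(x')|/d_\beta(x,x')\le a e^a\|f\|_\infty$. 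For a pair $x\ne x'$ with $x_0\ne x'_0$ the cone condition gives no information, but here $d_\beta(x,x')=1$, and non-negativity of $f$ gives $|f(x)-f(x')|\le\max(f(x),f(x'))\le\|f\|_\infty$. Taking the supremum over the two cases gives $|f|_\beta\le\max(3,1+ae^a)\|f\|_\infty$ (with room to spare), and since $\|f\|_\beta=\max(\|f\|_\infty,|f|_\beta)$ the first assertion follows.

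For the second assertion, suppose $g\in\mathcal C_a$ and $g\pm f\in\mathcal C_a$. Non-negativity of $g+f$ and of $g-f$ forces $|f|\le g$ pointwise, so $\|g\pm f\|_\infty\le 2\|g\|_\infty$. Writing $f=\frac{1}{2}\big((g+f)-(g-f)\big)$ and applying the norm comparison just proved to the two elements $g\pm f$ of $\mathcal C_a$, with $C:=\max(3,1+ae^a)$, I would obtain $\|f\|_\beta\le\frac{1}{2}(\|g+f\|_\beta+\|g-f\|_\beta)\le\frac{C}{2}(\|g+f\|_\infty+\|g-f\|_\infty)\le 2C\|g\|_\infty\le 2C\|g\|_\beta$, which is exactly $D$-adaptedness with $D=2C=\max(6,2+2ae^a)$.

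The estimate is routine and I do not expect a genuine obstacle; the only point needing care is that the cone inequality controls ratios only on pairs sharing the zeroth coordinate, so the Lipschitz bound must be obtained separately on the remaining pairs — where it is immediate because $d_\beta=1$ there and $f$ is bounded and non-negative — together with keeping track of the degenerate case $f(x')=0$ and noting that the stated constants are deliberately loose.
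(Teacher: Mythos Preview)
Your proof is correct and follows essentially the same route as the paper's: split the Lipschitz quotient according to whether $x_0=x'_0$, use the cone ratio bound together with $e^t-1\le te^t$ in the first case and the trivial bound in the second, then deduce $D$-adaptedness by writing $2f=(g+f)-(g-f)$ and applying the first part to $g\pm f$. The only cosmetic difference is that in the case $x_0\ne x'_0$ you exploit non-negativity to get $|f(x)-f(x')|\le\|f\|_\infty$ rather than the paper's $2\|f\|_\infty$, which is why your constants have ``room to spare''.
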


\begin{proof}
Let $f\in C_a$. If $x,x'\in X^+$ have different initial symbols, then $|f(x)-f(x')|
\le 2\|f\|_\infty\le \max(2,ae^a)\|f\|_\infty d_\beta(x,x')$. 
If they have the same initial symbol, then $|f(x)-f(x')|\le |f(x)|(e^{ad_\beta(x,x')}-1)
\le \|f\|_\infty ae^ad_\beta(x,x')\le \max(2,ae^a)\|f\|_\infty d_\beta(x,x')$, 
where we used the mean value theorem for
the second inequality. Hence $|f|_\beta\le \max(2,ae^a)\|f\|_\infty$ so that
$\|f\|_\beta\le \max(3,1+ae^a)\|f\|_\infty$.

For the second statement in the lemma, $g\pm f\in\mathcal C_a$ implies $\|f\|_\infty\le \|g\|_\infty$, so that
$\|g\pm f\|_\infty\le 2\|g\|_\infty$ and $\|g\pm f\|_\beta\le \max(6,2+2ae^a)\|g\|_\infty
\le \max(6,2+2ae^a)\|g\|_\beta$. Subtracting $g-f$ from $g+f$, we obtain the desired bound. 
\end{proof}

For these cones, we have the following lemma (which can be seen as a special case of a result 
of And\^o \cite{Ando}). Expressing arbitrary H\"older continuous functions 
as a difference of elements of the cone will allow us to prove simplicity of the top Lyapunov 
exponent.
\begin{lem}\label{lem:Ando}
For all $f\in C^\beta(X^+)$, there exist $g,h\in \mathcal C_a$ with 
$\|g\|_\beta,\|h\|_\beta\le (2+\frac 1a)\|f\|_\beta$ such that $f=g-h$.
\end{lem}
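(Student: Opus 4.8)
The plan is to build the decomposition by adding a large enough constant, rather than by any more delicate surgery on $f$: naive splittings such as $f=f^+-f^-$ fail because the positive and negative parts need not satisfy the ratio bound defining $\mathcal C_a$ (and in any case need not be Lipschitz). Concretely, I would set
$$
c=\|f\|_\infty+\frac{|f|_\beta}{a},
$$
let $\mathbf 1$ denote the constant function $1$ on $X^+$, and put $g=f+c\mathbf 1$ and $h=c\mathbf 1$. Then $f=g-h$ identically, $h\ge 0$, and since $h$ is constant the quotient $h(x)/h(x')$ is always $1$, so $h\in\mathcal C_a$; its norm satisfies $\|h\|_\beta=c\le\|f\|_\beta+\tfrac1a\|f\|_\beta\le(2+\tfrac1a)\|f\|_\beta$.

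The substantive point is that $g\in\mathcal C_a$, and this is where the choice of $c$ is calibrated. First, $g(x)=f(x)+c\ge -\|f\|_\infty+c=|f|_\beta/a\ge 0$, so $g$ is nonnegative and, more importantly, bounded below by $|f|_\beta/a$. Then for any $x,x'$ with $x_0=x'_0$ I would estimate
$$
\frac{g(x)}{g(x')}=1+\frac{f(x)-f(x')}{g(x')}\le 1+\frac{|f|_\beta\,d_\beta(x,x')}{g(x')}\le 1+a\,d_\beta(x,x')\le e^{a\,d_\beta(x,x')},
$$
where the middle inequality uses exactly the lower bound $g(x')\ge|f|_\beta/a$. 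Since $\mathcal C_a$ imposes no constraint on $f(x)/f(x')$ when $x_0\ne x'_0$, no further case analysis is needed, and this establishes $g\in\mathcal C_a$.

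It remains to bound $\|g\|_\beta$: adding a constant leaves the Lipschitz seminorm unchanged, so $|g|_\beta=|f|_\beta\le\|f\|_\beta$, while $\|g\|_\infty\le\|f\|_\infty+c=2\|f\|_\infty+|f|_\beta/a\le(2+\tfrac1a)\|f\|_\beta$; taking the maximum gives $\|g\|_\beta\le(2+\tfrac1a)\|f\|_\beta$, as required. I do not anticipate any genuine obstacle here — the only thing one must get right is that the single constant $c$ is simultaneously large enough to make $f+c\mathbf 1$ nonnegative and large enough that the ratio bound $1+|f|_\beta d_\beta/g(x')\le 1+a\,d_\beta$ holds, and $c=\|f\|_\infty+|f|_\beta/a$ is the natural value that does both while keeping the norms within the claimed factor.
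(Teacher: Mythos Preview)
Your proof is correct and follows essentially the same approach as the paper: write $f=(f+c\mathbf 1)-c\mathbf 1$ for a suitably large constant $c$, check that the shifted function is bounded below by a positive multiple of $1/a$, and deduce the ratio bound via $1+t\le e^t$. The only cosmetic difference is the choice of constant---the paper uses $c=(1+\tfrac1a)\|f\|_\beta$ whereas you use the slightly smaller $c=\|f\|_\infty+|f|_\beta/a$---but both lead to the same estimate $(2+\tfrac1a)\|f\|_\beta$.
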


\begin{proof}
Let $f\in C^\beta(X^+)$, 
let $g=f+(1+\frac 1a)\|f\|_\beta$ and $h=(1+\frac 1a)\|f\|_\beta$. 
Clearly $h\in \mathcal C_a$. Notice that $\min g\ge \frac 1a\|f\|_\beta$, so that 
\begin{align*}\frac{g(x)}{g(x')}&=1+\frac{g(x)-g(x')}{g(x')}\le 1+\|f\|_\beta d_\beta(x,x')/(\|f\|_\beta/a)\\
&= 1+ad_\beta(x,x')\le e^{ad_\beta(x,x')}.
\end{align*}
In particular, $g\in\mathcal C_a$ and $\|g\|_\beta$, $\|h\|_\beta$ are bounded above by
$(2+\frac 1a)\|f\|_\beta$. 
\end{proof}

\begin{lem}\label{lem:finitediam}
Let $0<b<a$ and $A\ge 1$. Let $P$ be a subset of the alphabet of $X$. Write
$[P]=\bigcup_{j\in P}[j]$ and 
$$
S_P=\left\{f\colon f(x)>0\text{ iff $x\in[P]$;  } f(x)\le Af(x')\text{ for all $x,x'\in [P]$}\right\}.
$$
Then there exists $K>0$ such that $\Theta_{\mathcal C_a}(f,g)\le K$ for all $f,g\in S_P\cap \mathcal C_b$. 
\end{lem}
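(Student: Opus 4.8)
The plan is to bound $\Theta_{\mathcal C_a}(f,g)$ by controlling $m(f,g)$ from below and $M(f,g)$ from above, uniformly over $f,g\in S_P\cap\mathcal C_b$. Since $\mathcal C_a$ is a cone, for $t>0$ one has $g-tf\in\mathcal C_a$ iff $\frac1t g-f\in\mathcal C_a$, which yields $M(f,g)=1/m(g,f)$; as $S_P\cap\mathcal C_b$ is symmetric in its two arguments, it therefore suffices to exhibit a single constant $t_0>0$ such that $f-t_0g\in\mathcal C_a$ for every such pair. Indeed, this gives $m(f,g)\ge t_0$ and $m(g,f)\ge t_0$, hence $\Theta_{\mathcal C_a}(f,g)=-\log m(f,g)-\log m(g,f)\le -2\log t_0=:K$. (If $[P]=\emptyset$ then $S_P=\{0\}$ and there is nothing to prove, so assume $[P]\neq\emptyset$.)

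First I would normalize. Because $\Theta_{\mathcal C_a}$ is invariant under positive rescaling of either argument, fix a point $x^*\in[P]$ and rescale so that $f(x^*)=g(x^*)=1$. The global comparison in the definition of $S_P$ then forces $A^{-1}\le f(x),g(x)\le A$ for all $x\in[P]$, so $A^{-2}\le f(x)/g(x)\le A^{2}$ on $[P]$, while $f$ and $g$ both vanish identically off $[P]$.

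Next I would verify that $h:=f-t_0g$ lies in $\mathcal C_a$ for a suitable $t_0$. Off $[P]$ both terms vanish, so $h\equiv 0$ there; on $[P]$ we have $h>0$ as soon as $t_0<A^{-2}\le f(x)/g(x)$, which handles the positivity clause of $\mathcal C_a$ and rules out an isolated zero inside a cylinder. For the ratio inequality $h(x)\le e^{a\,d_\beta(x,x')}h(x')$ when $x_0=x'_0$: if the common first symbol lies outside $P$ both sides are $0$; if it lies in $P$, write $d=d_\beta(x,x')\le\beta$ and use $f\in\mathcal C_b$ to get $f(x)\le e^{bd}f(x')$ and (applying the $\mathcal C_b$ inequality to the pair $(x',x)$) $g(x)\ge e^{-bd}g(x')$. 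Substituting, a short rearrangement reduces the desired inequality to
\[
t_0\ \le\ \frac{e^{ad}-e^{bd}}{e^{ad}-e^{-bd}}\cdot\frac{f(x')}{g(x')}.
\]
Here $b<a$ makes the function $d\mapsto(e^{ad}-e^{bd})/(e^{ad}-e^{-bd})$ continuous and strictly positive on $(0,\beta]$, extending continuously to $d=0$ with value $(a-b)/(a+b)$; hence it attains a minimum $m_0\in(0,1)$ on $[0,\beta]$. Taking $t_0=A^{-2}m_0$ satisfies both requirements simultaneously (note $t_0<A^{-2}$ since $m_0<1$), so $h\in\mathcal C_a$, and the reduction in the first paragraph closes the argument.

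I expect the only delicate point to be the case analysis for the ratio inequality, specifically ensuring that $h$ does not develop a zero at an interior point of some cylinder $[j]$ with $j\in P$, which would violate the $\mathcal C_a$ condition; this is exactly why one needs the strict inequality $t_0<A^{-2}$, i.e.\ $m_0<1$, rather than just $t_0\le A^{-2}$. The identity $M(f,g)=1/m(g,f)$, the normalization, and the elementary estimate for the ratio of exponentials are all routine.
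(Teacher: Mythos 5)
Your argument is correct and follows essentially the same route as the paper: both exhibit a single $t_0>0$, uniform over $f,g\in S_P\cap\mathcal C_b$, with $f-t_0g\in\mathcal C_a$, which immediately gives $\Theta_{\mathcal C_a}(f,g)\le\log(1/t_0^2)$. The only differences are bookkeeping: you solve the ratio inequality for $t_0$ exactly and minimize the resulting expression over $d\in[0,\beta]$ (normalizing at a single point $x^*$, which produces $A^2$), whereas the paper picks $t$ explicitly so that $\frac{2Atb}{1-At}\le a-b$ after the elementary estimates $1-e^{-2bd}\le 2bd$ and $1+u\le e^u$, and normalizes by $\min_{[P]}f=\min_{[P]}g=1$ so only $A$ appears.
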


The conclusion here states that the diameter of the set is finite. This is a key hypothesis in Birkhoff's cone
contraction argument. 

\begin{proof}
Let $t>0$ be chosen sufficiently small to ensure that $\frac{2btA}{1-tA}\le a-b$. 
Let $f,g\in S_P\cap \mathcal C_b$. Using the scale-homogeneity of $\Theta_{\mathcal C_a}$, 
we may scale $f$ and $g$ so that $\min_{[P]} f=\min_{[P]} g=1$,
and hence $\max f,\max g\le A$. 

We claim that $f-tg\in\mathcal C_a$. 
Let $x,x'\in X^+$ have a common first symbol belonging to $P$ (if $x,x'$ have a common first symbol 
outside $P$, then $(f-tg)(x)$ is trivially bounded above by $e^{ad_\beta(x,x')}(f-tg)(x')$ since both of 
these quantities are zero). We have
\begin{align*}
\frac{f(x')-tg(x')}{f(x)-tg(x)}&\le \frac{e^{bd_\beta(x,x')}f(x)-te^{-bd_\beta(x,x')}g(x)}{f(x)-tg(x)}\\
&= e^{bd_\beta(x,x')}+\frac{t(e^{bd_\beta(x,x')}-e^{-bd_\beta(x,x')})g(x)}{f(x)-tg(x)}\\
&\le e^{bd_\beta(x,x')}\left(1+\frac{At}{1-At}(1-e^{-2bd_\beta(x,x')})\right)\\
&\le e^{bd_\beta(x,x')}\left(1+\frac{2Atb}{1-At}d_\beta(x,x')\right)\\
&\le e^{bd_\beta(x,x')}e^{(a-b)d_\beta(x,x')}=e^{ad_\beta(x,x')},
\end{align*}
so that $f-tg\in \mathcal C_a$. By symmetry, $g-tf\in\mathcal C_a$, or equivalently $(1/t)g-f\in \mathcal C_a$. 
Hence $\Theta_{\mathcal C_a}(f,g)\le \log(1/t^2)$ for all $f,g\in S_P\cap \mathcal C_b$.
%
%
\end{proof}

\begin{lem}\label{lem:positiveset}
Let $\pi\colon X\to Y$ be a factor map of class degree 1.
Let $W=w_0^{n-1}$ be a minimal transition block in $Y$. Then there exists an $A\ge 1$ such that
for any $y\in[W]$, and any $f\in \mathcal C_a$, 
$\mathcal L^{(n)}_y f\in S_P\cup\{0\}$, where $S_P$ is the set in 
Lemma \ref{lem:finitediam} and $P$ is $\{j\in \mathsf A(X)\colon \exists U\in\pi_b^{-1}(W)
\colon Uj\in \mathsf L(X)\}$.
\end{lem}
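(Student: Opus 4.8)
The plan is to work from the inductive formula $\mathcal L^{(n)}_y f(x)=\sum_{U\in\pi_b^{-1}(W),\ Ux\in X^+}e^{S_n\phi(Ux)}f(Ux)$, which is valid since $y\in[W]$ forces $y_0^{n-1}=W$; I assume, as one may after recoding, that $X$ is one-step, so that $Ux\in X^+$ iff $Uj\in\mathsf L(X)$ whenever $x\in[j]\cap X^+$. Writing $g:=\mathcal L^{(n)}_y f$, it suffices to check three things: (a) $g$ vanishes off $[P]$; (b) either $g\equiv 0$ or $g>0$ throughout $[P]$; (c) if $g\not\equiv 0$, then $g(x)/g(x')\le A$ for all $x,x'\in[P]$, where $A\ge 1$ depends only on $\phi$, $n$, $a$ and $|\pi_b^{-1}(W)|$. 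Part (a) is immediate: a nonzero summand of $g(x)$ supplies $U\in\pi_b^{-1}(W)$ with $Ux\in X^+$, hence $x_0\in P$, while for $x_0\notin P$ the index set is empty.

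For (b), observe that each summand $f(Ux)$ sees $f$ only on the ``entry cylinder'' $[U_0]$, and that, by the cone condition (with $d_\beta\le 1$), on any $1$-cylinder a function in $\mathcal C_a$ is either identically $0$ or everywhere at least $e^{-a}$ times its supremum there. So $g\equiv 0$ when $f$ vanishes on all entry cylinders; otherwise $f>0$ on some entry cylinder $[i]$, and (b), in the form ``$g>0$ throughout $[P]$'', reduces to the crux of the lemma: \emph{for every entry symbol $i$ and every $j\in P$ there is $w\in\pi_b^{-1}(W)$ with $w_0=i$ and $wj\in\mathsf L(X)$} --- because then the term indexed by $w$ contributes $e^{S_n\phi(wx)}f(wx)>0$ to $g(x)$ for every $x\in[j]$ and every $j\in P$. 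To prove this decoupling, pick $U\in\pi_b^{-1}(W)$ with $U_0=i$ and $U'\in\pi_b^{-1}(W)$ with $U'j\in\mathsf L(X)$ (the latter exists since $j\in P$; and $P\ne\emptyset$ because $W$ has a lift in $\mathsf L(X)$, which extends to the right). Since $W$ is a minimal transition block for the class-degree-one map $\pi$, its set of representatives is a single symbol $\{s\}$, and by the Allahbakhshi--Quas theorem there is a fixed position $l$ through which every lift of $W$ reroutes; so we obtain $v,v'\in\pi_b^{-1}(W)$ with $v_0=i$, $v_l=v'_l=s$ and $v'_{n-1}=U'_{n-1}$. Gluing at position $l$, where both words equal $s$, the word $w:=v_0\cdots v_l\,v'_{l+1}\cdots v'_{n-1}$ lies in $\mathsf L(X)$ by the one-step property, has $\pi(w)=W$, $w_0=i$ and $w_{n-1}=U'_{n-1}$, so $wj\in\mathsf L(X)$, as wanted.

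Part (c) is then a routine estimate. Put $M:=\max\{\sup_{[i]}f: i\text{ an entry symbol}\}$. The sum defining $g(x)$ has at most $|\pi_b^{-1}(W)|$ terms, each $\le e^{n\|\phi\|_\infty}M$. On the other hand, applying the decoupling with $i$ attaining $M$ and $j=x'_0$ produces a single term of $g(x')$ that is $\ge e^{-n\|\phi\|_\infty-a}M$, using $\|S_n\phi\|_\infty\le n\|\phi\|_\infty$ and the cone bound $f(wx')\ge e^{-a}\sup_{[i]}f$. Dividing gives $g(x)/g(x')\le A:=|\pi_b^{-1}(W)|\,e^{2n\|\phi\|_\infty+a}$, which is $\ge 1$ and independent of $y$, $x$, $x'$, $f$; taking this $A$ as the parameter defining $S_P$, parts (a)--(c) say precisely that $g\in S_P\cup\{0\}$. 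The one real difficulty is the decoupling statement --- that the entrance symbol and the admissible continuation symbol of a lift of $W$ can be prescribed independently. This is the structural content of $W$ being a minimal transition block in a class-degree-one factor, and the splicing argument above is the natural way to extract it from the Allahbakhshi--Quas routing theorem; everything else is bookkeeping with the transfer-operator formula together with crude bounds on $S_n\phi$.
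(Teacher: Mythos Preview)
Your proof is correct and follows essentially the same route as the paper's: the crux in both is the decoupling statement that any entry symbol $i$ can be connected to any $j\in P$ through a single preimage of $W$, obtained by routing both ends through the unique representative $s$ and splicing at position $l$. The only organizational difference is that the paper first reduces to functions supported on a single cylinder (using that $S_P\cup\{0\}$ is closed under addition and $\LL_y^{(n)}$ is linear), while you treat general $f\in\mathcal C_a$ directly via the cone dichotomy on $1$-cylinders; the resulting constants differ (the paper gets $A=|\mathsf A(X)|^n e^{a+n(\max\phi-\min\phi)}$ versus your $|\pi_b^{-1}(W)|\,e^{2n\|\phi\|_\infty+a}$), but the substance is identical.
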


\begin{proof}
Since $S_P\cup\{0\}$ is closed under addition and $\mathcal L_y^{(n)}$ is linear,
it suffices to prove the statement for a function $f$
supported on a single cylinder set. Suppose that $f$ is supported on $[k]$. If there is no preimage of $W$
whose initial symbol is $k$, we see that $\mathcal L_y^{(n)}f=0$ since there are no positive summands. 
Suppose on the other hand that $U$ is a preimage of $W$ under $\pi$ starting with a $k$. 
Let $j\in P$ and let $V$
be a preimage of $W$ under $\pi$ such that $Vj\in \mathsf L(X)$. 
Since $W$ is a minimal transition block with a single representative,
there exists a preimage $U'$ of $W$ starting with the first symbol of $U$ 
and ending with the last symbol of $V$. 
If $x\in [j]$, We now calculate
$$
\mathcal L^{(n)}_yf(x)\ge e^{S_n\phi(U'x)}f(U'x)\ge e^{n\min\phi}\|f\|_\infty/e^a.
$$
On the other hand, it is clear that $\mathcal L^{(n)}_yf(x)\le |\mathsf A(X)|^ne^{n\max\phi}\|f\|_\infty$
for any $x\in X$. Hence we have demonstrated the hypothesis of Lemma \ref{lem:finitediam} is satisfied with 
$A=e^{a+n(\max\phi-\min\phi)}|\mathsf A(X)|^n$.
\end{proof}

We point out that the idea of studying the Ruelle-Perron-Frobenius cocycle over a factor $Y$
and expressing the operators corresponding to symbols in $\mathsf A(Y)$ 
as sums of operators indexed by symbols in $\mathsf A(X)$, as well as some of the cones
that we study here and the description of $S_P$ above, appears in work of Piraino \cite{Mark}.

\begin{thm}\label{thm:cd1case}[Main theorem, class degree 1 case]
Let $X$ be an irreducible shift of finite type, let $\pi\colon X\to Y$ be a forward-looking 
factor map of class degree 1 and 
let $\phi$ be a H\"older continuous function on $X^+$.
Suppose $\nu$ is a fully supported invariant measure on 
$Y$. 
Then the cocycle $(\mathcal L_{y}^{(n)})$ 
has a simple top Lyapunov exponent, whose value is $r_{\pi}(\nu)$, the relative pressure of
$\phi$ over $\nu$. 

Further, for $\nu$-a.e.\ $y$, 
$$
\lim_{n\to\infty}\tfrac 1n\log\|\LL_y^{(n)}\mathbf 1\|=
\lim_{n\to\infty}\tfrac 1n\log\|\LL_y^{(n)}\mathbf 1_{\pi^{-1}[y_0]}\|
=r_\pi(\nu).
$$ 
\end{thm}

\begin{proof}
By conjugating $X$ and $Y$ if necessary, we may assume that $\pi$ is a one-block map.
This does not affect any of the hypotheses or conclusions of the theorem (see 
\cite[Proposition 1.5.12]{LindMarcus},\cite{MahsaAQ} for more details).
Let $\beta$ be such that $\|\phi\|_\beta<\infty$ and let
$a$ satisfy the hypothesis in Lemma \ref{lem:conetocone}.
Let $W$ be a minimal transition block for the factor map $\pi\colon X\to Y$. 
By Lemma \ref{lem:Ando}, $\mathcal C_a-\mathcal C_a=C^\beta(X^+)$. 
By Lemmas \ref{lem:conetocone}, \ref{lem:finitediam} and \ref{lem:positiveset},
we see that $\mathcal L_y^{(|W|)}\mathcal C_a$ is a finite diameter subset 
of $\mathcal C_a$ for any $y\in[W]$. 
Since the hypotheses of Theorem \ref{thm:Joseph} are satisfied (the continuity of $y\mapsto\LL_y$
is because the map is piecewise constant and the $D$-adaptedness
condition on $\mathcal C_a$ is satisfied by the
second statement of Lemma \ref{lem:normcomp}), the top Lyapunov exponent 
of the cocycle $(\LL_y^{(n)})$
acting on $C^\beta(X^+)$ is simple.

Notice that for $y\in Y$, and $g\in\mathcal C_a$,
$$
\LL_y^{(n)}g(x)=
\sum_{W\in\pi^{-1}(y_0^{n-1})\colon Wx_0\in\mathsf L(X)}e^{S_n\phi(Wx)}g(Wx),
$$
so that $\|\LL_y^{(n)}g\|_\infty$ is bounded above by $p_n(\phi,\pi^{-1}y)\|g\|_\infty$.
By Lemmas \ref{lem:conetocone} and  \ref{lem:normcomp}, 
$\|\LL_y^{(n)}g\|_\beta$ is bounded above by 
$\max(3,1+e^a)p_n(\phi,\pi^{-1}y)\|g\|_\infty$.
If $f\in C^\beta(X^+)$, using Lemma \ref{lem:Ando}, we may write $f$ as the difference $g-h$ 
with $g,h\in \mathcal C_a$, each of $\|\cdot\|_\beta$ norm at most $(2+\frac 1a)\|f\|_\beta$.
Hence $\|\LL_y^{(n)}f\|_\beta\le 2(2+\frac 1a)\max(3,1+e^a)p_n(\phi,\pi^{-1}y)\|f\|_\beta$.
As noted above, we have
$\limsup_{n\to\infty}\frac 1n\log p_n(\phi,\pi^{-1}y)=P(\phi,\pi^{-1}y)$, which is
$r_\pi(\nu)$ by Theorem \ref{thm:relVP},
so that the top Lyapunov exponent is bounded above by $r_\pi(\nu)$. 

For the converse inequality, let $(x^i)_{i\in\mathsf A(X)}$ be a collection of points in $X^+$, 
where $x^i$ starts with the symbol $i$. Now 
$$
\sum_{i\in\mathsf A(X)}\LL^{(n)}_y\mathbf 1(x^i)\ge e^{-c}p_n(\phi,\pi^{-1}y),
$$
where $c$ is a constant independent of $n$ where $|S_n\phi(x)-S_n\phi(x')|\le c$ whenever 
$x_0^{n-1}={x'}\,_0^{n-1}$ (such a $c$ exists since $\phi$ is H\"older). Then
$$
\left\|\LL^{(n)}_y\mathbf 1\right\|_\beta\ge
\left\|\LL^{(n)}_y\mathbf 1\right\|_\infty\ge \frac{e^{-c}}{|\mathsf A(X)|}p_n(\phi,1,\pi^{-1}y).
$$
In particular, for $\nu$-a.e.\ $y$, the limit superior growth rate of $\|\LL^{(n)}_y\mathbf 1\|_\beta$
is at least $r_\pi(\nu)$, as required.
\end{proof}

\begin{proof}[Proof of Theorem \ref{thm:main}]
We assume without loss of generality as above that $\pi$ is a one-block map. 
Using Theorem \ref{thm:Yoo-factor}, $\pi:X\to Z$ may be factorized as $\pi_2\circ\pi_1$, where
$\pi_1$ is of class degree 1 from $X$ to a sofic shift $Y$; and $\pi_2$
is finite-to-one, and for $\nu$-a.e.\ point, $\pi_2^{-1}(z)$ consists of $c_\pi$ pre-images.

We need a more precise description of the construction of the
sofic shift $Y$ and the factor code $\pi_1$, for which we will follow \cite{Yoo-Factor}. The space
$Y$ is built from a minimal transition block $W$ in $\mathsf L(Z)$. Recall the 
representatives of the transition block are a subset $B$ of $\mathsf A(X)$ of cardinality 
$c_\pi$ such that if $\pi(x)\in [W]$, then $x$ may be locally modified on the coordinates $(0,n-1)$
to give a point $x'\in X$ with $x'_l\in B$. 

The alphabet of $Y$ is then $\mathsf A(Z)\times (B\cup\{\star\})$. The factor map $\pi_1$ is defined
as follows:
$$
\pi_1(x)_m=\begin{cases}
(\pi(x)_m,s)&\text{$\pi(x)_{m-l}^{m-l+n-1}=W$, $x_{m-l}^{m-l+n-1}$ routable through $s$;}\\
(\pi(x)_m,\star)&\text{$\pi(x)_{m-l}^{m-l+n-1}\ne W$.}
\end{cases}
$$
That is, $\pi_1(x)$ records the image in $Z$, together with the representatives in $B$ through which 
the orbit of $x$ may be routed each time that the orbit passes through a transition block.
The factor map $\pi_2$ is the one-block factor map from $Y$ to $Z$ defined by the 
symbol map sending $(a,b)$ to $a$ for any $(a,b)\in\mathsf A(Z)\times  (B\cup\{\star\})$. 

We then define an operator cocycle over $Y$. For each $s\in B$, let $R_s\subset \mathsf A(X)$
be the collection of symbols in $X$ that a preimage of $W$ may pass through if it is routable through $s$. 
By Theorem \ref{thm:AHJ}, these sets are disjoint. Write $[R_s]$ for $\bigcup_{i\in R_s}[i]$ and let 
$q$ be the $l$th symbol of $W$.

The generator of the cocycle is then defined by
\begin{align*}
\bar\LL_{(j,\star)}f(x)&=\LL_jf(x)\text{ for $j\in\mathsf A(Z)$;}\\
\bar\LL_{(q,s)}f(x)&=\LL_q(\mathbf 1_{[R_s]}f)(x)\text{ for $s\in B$.}
\end{align*}
That is, each time $\pi(x)$ passes through a transition block, the operator projects to 
the part of the function routable through the specified representative. Note that 
$(q,\star)\in\mathsf A(Y)$, and this appears in the image of points under $\pi_1$ for
points in $X$ whose symbol maps to $q$, but where the word $\pi(x)_{m-l}^{m-l+n-1}$
is not equal to $W$.

By Theorem \ref{thm:Yoo-mult}, there are finitely many ergodic invariant measures on $Y$
projecting to $\nu$, say $\mu_1,\ldots,\mu_k$, each fully supported; as well as multiplicities $m_1,\ldots,m_k$ 
summing to $c_\pi$ such that a $\nu$-generic $z\in Z$ has $m_i$ $\mu_i$-generic pre-images
under $\pi_2$ for each $i$, with the whole collection of $c_\pi$ pre-images mutually separated.
Further, there exists an ergodic measure $\bar\mu$ on $Y^{c_\pi}$
where $\bar\mu$-almost every point is supported on the $c_\pi$ pre-images of some point $z\in Z$;
the first $m_1$ being generic points for the ergodic measure $\mu_1$ on $Y$; the next $m_2$ being generic
for the measure $\mu_2$ etc. We assume without loss of generality that $r_{\pi_1}(\mu_1)\ge
r_{\pi_1}(\mu_2)\ge\ldots$, where $r_{\pi_1}(\mu_i)$ is the $\pi_1$-relative pressure of $\phi$ 
over $\mu_i$; and that the maximal value of $r_{\pi_1}(\mu_i)$ is attained for
$i=1,\ldots,p$ (but not for $i=p+1,\ldots,k$). Notice that $r_{\pi_1}(\mu_i)=r_\pi(\nu)$
for $i=1,\ldots,p$ since any ergodic measure on $X$ in $\pi_*^{-1}\{\nu\}$ lies in one of the 
${\pi_1}_*^{-1}\{\mu_i\}$ for some $\mu_i$; and $\pi_2$ is finite-to-one, so does not decrease entropy.

Since $\pi_1\colon X\to Y$ and each $\mu_i$ satisfies the conditions of Theorem \ref{thm:cd1case},
we see that there is a simple top exponent $\lambda_i$ for the cocycle
$(\bar\LL_y^{(n)})_{y\in Y}$ for each of the measures $\mu_i$. The set of $y\in Y$ for which 
the exponent $\lambda_i$ is achieved and for which the second Lyapunov exponent is
strictly smaller is a collection of full $\mu_i$-measure.

For $\bar\mu$-a.e.\ $(y^1,\ldots,y^{c_\pi})\in Y^{c_\pi}$, the simple top exponent of 
the cocycle $(\bar\LL_{y})^{(n)}$ is $\lambda_i$ for each $y=y^{M_i+k}$ with $k=1,\ldots,m_i$ 
(where $M_i=m_1+\ldots+m_{i-1}$ and $M_1=0$). 
In particular, the top exponent of the cocycle is almost surely simple with exponent
$\lambda_1=r_{\pi_1}(\mu_1)$ over each of $y^1,\ldots,y^{m_1+\ldots+m_p}$
and strictly smaller for the other $y$'s. 

We now derive a relationship between the cocycle $(\mathcal L^{(n)}_z)$ over $Z$ 
and the cocycle $(\bar{\mathcal L}^{(n)}_y)$ over $Y$.
Recall that for $\bar\mu$-a.e.\ $\bar y=(y^1,\ldots,y^{c_\pi})$, one has the 
equality $\pi(y^1)=\ldots=\pi(y^{c_\pi})$. 
Write $\bar\pi(\bar y)$ for this common value.
Next, we claim that for $\bar\mu$-a.e.\ $\bar y$, 
\begin{equation}\label{eq:Lsum}
\LL_{\bar\pi(\bar y)}^{(n)}f(x)=
\sum_{i=1}^{c_\pi}\bar\LL^{(n)}_{y^i}f(x)
\end{equation}
for all $n$ such that $\bar\pi(\bar y)_0^{n-1}$ contains a copy of $W$, the minimal transition block
used in the definition of $\pi_1$. 

To see this, notice that if $q=w_l$ is the symbol in $W$ over which the representatives lie,
then the following identities hold
\begin{align*}
\LL_q&=\sum_{i\in S}\bar\LL_{q,i}\\
\LL_j&=\bar\LL_{j,\star}\text{ for each $j\in\mathsf A(Z)$ (including $q$)}.
\end{align*}
So the composition of the $\LL_{z_i}$ is a composition 
in which a number of the terms (those occurring when $z$ contains a copy of $W$) are replaced
by a sum of $\bar\LL_{q,i}$. Since the $\bar\LL$ are linear, we may distribute the 
composition over the sum.
Since when $z$ is right transitive, the transition classes are mutually separated
(Theorem \ref{thm:AHJ}), almost all of the terms in the summation vanish; the only ones that survive
are those in which the choices of representative are consistent: the representative over one instance of $W$
together with the point $z$ determines the representative over all of the other instances of $W$ by virtue of
the mutual separation of the classes in $\pi^{-1}(z)$.

For $\bar\mu$-a.e.\ $\bar y=(y^1,\ldots,y^{c_\pi})$, each of the $y^i$'s is right transitive; and
the map $\pi_1\colon X\to Y$ is of class degree 1. The pre-images $\pi_1^{-1}(y^i)$ for $i=1,\ldots,c_\pi$
form the transition classes, $\mathcal T(\bar\pi(\bar y))$ in $X$ over $\bar\pi(\bar y)\in Z$. 
By Theorem \ref{thm:cd1case}, applied to $\pi_2\colon X\to Y$, for $\nu$-a.e.\ $\bar\pi(\bar y)$ and
each $y^i$ with $1\le i\le m_1+\ldots+m_p$,
the cocycle $(\bar\LL_{y^i}^{(n)})$ has an equivariant one-dimensional space of functions
growing at rate $r_{\pi_1}(\mu_1)$. In particular, the functions $\mathbf 1_{\pi_1^{-1}[y^i_0]}$
for $i=1,\ldots,M_p$ grow at rate $r_{\pi_1}(\mu_1)$ under the respective
cocycles $\bar\LL_{y^i}^{(n)}$ (and are eventually annihilated by the other cocycles).
Since the $\mathcal T(\bar\pi(\bar y))$ are mutually separated, for each $n$, the
$\bar\LL_{y^i}^{(n)}\mathbf 1_{\pi_1^{-1}[y^i_0]}$ are disjointly supported. 
By \eqref{eq:Lsum}, the span of $\{\mathbf 1_{\pi_1^{-1}[y^i_0]}\colon 1\le i\le M_p\}$
is an $M_p$-dimensional space of functions, where the entire space grows under the cocycle
$\LL_{\bar y}^{(n)}$ at rate $r_{\pi_1}(\mu_1)=
r_{\pi}(\nu)$. 

Theorem \ref{thm:cd1case} implies that in any two-dimensional space of functions
supported on $\pi_1^{-1}(y^i)$, there is a function whose growth rate is strictly smaller
than $r_{\pi}(\nu)$. 

On the other hand, by Theorem \ref{thm:cd1case} for $i>M_p$, the growth rate on
$\pi_1^{-1}(y^i)$ is at most $r_{\pi_1}(\mu_{p+1})$, which is strictly smaller. Combining these facts,
it follows that the dimension of the fastest growing space is precisely $m_1+\ldots+m_p$ as required. 
\end{proof}

\begin{proof}[Proof of Corollary \ref{cor:locconst}]
For this proof, we assume that $\pi$ is a one-block factor map, given by the 
map $\rho\colon\mathsf{A}(X)\to\mathsf{A}(Z)$, (as in the previous theorem)
and the locally constant function $\phi(x)$ depends only on $x_0$. The key observation in this case is
that $\LL_z$ maps $\mathsf{LC}$,
the finite-dimensional subspace of functions depending only on the 0th coordinate into itself.

Specifically, if $f$ is the function taking the value $f_i$ on the cylinder $[i]$, then 
$$
\LL_z f(x)=\sum_{i:ix_0\in\mathsf{L}(X), \rho(i)=z_0} e^{\phi_i}f_i,
$$
another function whose value is determined by $x_0$. 
That is $\LL_z$ is represented by the matrix with entries
$$
(A_z)_{ij}=\mathbf 1_{ij\in\mathsf{L}(X)}\mathbf 1_{\rho(i)=z_0} e^{\phi_i}.
$$
Let $\pi=\pi_1\circ\pi_2$ as in the proof of Theorem \ref{thm:main}, so that 
the symbol map $\rho$ is the composition of maps $\rho_1$ and $\rho_2$. 
We use the notation of the proof of Theorem \ref{thm:main}.
Let $\bar y=(y^1,\ldots,y^{c_\pi})$ be an generic element of the degree joining,
where we assume that $y^1,\ldots,y^{M_p}$ are generic for measures
$\mu_1,\ldots,\mu_p$ with $r_{\pi_1}(\mu_i)=r_\pi(\nu)$ for $i=1,\ldots,p$. 
Then we showed above that $\bar\LL_{y_i}^{(n)}\mathbf 1_{\pi_1^{-1}[y^i_0]}$ grows 
at rate $r_\pi(\nu)$ for $i=1,\ldots,M_p$ and for each $n$, these functions are disjointly 
supported. Further, $\bar\LL_{y_i}^{(n)}\mathbf 1_{\pi_1^{-1}[y^i_0]}
=\LL_{\bar\pi(\bar y)}^{(n)}\mathbf 1_{\pi_1^{-1}[y^i_0]}$.

Since $\mathbf 1_{\pi_1^{-1}[y^i_0]}\in\mathsf{LC}$, we see that
the multiplicity of the exponent $r_\pi(\nu)$ for the matrix cocycle $A_z^{(n)}$ is
at least $M_p$. However, multiplicity of the exponent $r_\pi(\nu)$ 
for the action of the cocycle on $C^\alpha(X^+)$ is
an upper bound for the multiplicity on the subspace $\mathsf{LC}$. Hence
the multiplicity of the leading exponent for the matrix cocycle is exactly $M_p$ as required.
\end{proof}

\section{Acknowledgment}
We would like to acknowledge a very helpful referee's report that led to a substantial
improvement in the presentation.

\bibliographystyle{abbrv}
\bibliography{METMREbib}

\end{document}